 \DeclareMathOperator{\soc}{soc}
\DeclareMathOperator{\aut}{Aut}
 \DeclareMathOperator{\frat}{Frat}
\DeclareMathOperator{\core}{Core}
\DeclareMathOperator{\alt}{Alt}
\DeclareMathOperator{\End}{End} 
 \DeclareMathOperator{\diag}{Diag}
\newcommand{\ol}{\overline}
\newcommand{\C}{ c} 
\newcommand{\D}{ d} 
\newcommand{\E}{e} 
\newcommand{\F}{\mathbb F}
\newcommand{\Z}{\mathbb Z}
\newcommand{\N}{\mathbb N}
\newtheorem{thm}{Theorem}
 \newtheorem{lemma}[thm]{Lemma}
\newtheorem{prop}[thm]{Proposition}
\numberwithin{equation}{section}
\renewcommand{\footnote}{\endnote}
\newcommand{\ignore}[1]{}\makeglossary
\begin{document}
\bibliographystyle{amsplain}

\title{Characterization of finitely generated infinitely iterated wreath products} 
\author{Eloisa Detomi and Andrea Lucchini}

\address{  Dipartimento di Matematica Pura ed Applicata,
Universit\`a di Padova,
Via Trieste 63, 
 35121 Padova, Italy.}
\email{detomi@math.unipd.it, lucchini@math.unipd.it}

\subjclass{Primary 20F05; Secondary 20E18 20E22.}
\keywords{Generation; wreath product; probability}

\begin{abstract}
Given a sequence of $(G_i)_{i \in \N}$ of finite transitive groups of degree $n_i$, let $W_\infty$ be the inverse limit of the iterated permutational wreath products $G_m \wr \cdots \wr G_2 \wr G_1$. We prove that $W_\infty$ is (topologically) finitely generated if and only if $\prod_{i=1}^{\infty} (G_i/G_i')$ is finitely generated and 
 the growth of the minimal number of generators of $G_i$  is bounded by  $ d \cdot n_1 \cdots n_{i-1}$ for a constant $d$. Moreover we give a criterion to decide whether $W_\infty$ is  positively finitely generated.  
\end{abstract}

\maketitle

\section{Introduction}
Let  $(G_i)_{i \in \N}$ be a sequence  of finite transitive permutation groups  of degree $n_i$ 
 and let  $W_m=G_m \wr \cdots \wr G_2 \wr G_1$  be the iterated (permutational) wreath product 
of the first $m$ groups. 
 The infinitely iterated wreath product is the inverse limit 
 $$W_\infty = \varprojlim_m W_m= \varprojlim_m (G_m \wr \cdots \wr G_2 \wr G_1).$$

In a recent paper Bondarenko \cite{bond}  studies some sufficient conditions on the sequence
 $(G_i)_{i \in \N}$ to get that the profinite group $W_\infty$ is (topologically) finitely generated:
 under the conditions that the minimal number of generators $d(G_i)$ of $G_i$ is bounded by a constant $d$
 and  $\prod_{i=1}^{\infty}  (G_i/G'_i)$ is finitely generated, using techniques from branch groups, 
 he  produces a finitely generated dense subgroup of $W_\infty$.

Since $\prod_{i=1}^{\infty}  (G_i/G'_i)$ is a homomorphic image of $W_\infty$,
 the second condition is clearly also a necessary condition: if  $W_\infty$ is  generated as a profinite group by $d$ elements, then  $d(\prod_{i=1}^{\infty}  (G_i/G'_i)) \leq d$.

Another necessary condition comes from the observation that  if $K$ is a finite permutation group of degree $n$ and $H$ is finite, then   $d(H) \leq  n \cdot  d(H \wr K)$ (see the remark at the beginning of section 5).   Since $W_i=G_i \wr W_{i-1}$ where $W_{i-1}$ is a permutation group of degree $ n_1 n_2 \cdots n_{i-1}$, it follows that if   $W_\infty$ is finitely generated by $d$ elements,  then
   $d(G_i) \leq d \cdot n_1 n_2 \cdots n_{i-1}$ for every $i>1$.

   The main result of this paper is   that these two necessary conditions are also sufficient. 
   \begin{thm}\label{main} Let $(G_i)_{i\in \N}$ be a sequence of  transitive permutation  groups of degree $n_i$.
   The inverse limit $W_\infty$ of the iterated wreath products $G_m \wr \cdots \wr G_2 \wr G_1$ is finitely generated if and only if
\begin{enumerate}
	\item $\prod_{i=1}^{\infty} (G_i/G_i')$ is finitely generated,
	\item there exists an integer $d$ such that $d(G_i) \leq d \cdot n_1 \cdots n_{i-1}$ for every $i>1$.
\end{enumerate}
   \end{thm}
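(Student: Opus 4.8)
Only the sufficiency of (1) and (2) needs proof. Since $W_\infty=\varprojlim_m W_m$ with surjective bonding maps, $W_\infty$ is $k$-generated as a profinite group if and only if $d(W_m)\le k$ for all $m$; so the task is to produce a single $k$, depending only on the constant $d$ of (2) and on $d_0:=d\!\left(\prod_{i\ge1}G_i/G_i'\right)$ (finite by (1)), such that $d(W_m)\le k$ for every $m$. Write $\ell_0=1$ and $\ell_j=n_1\cdots n_j$. We record two facts. First, $W_m$ carries the normal series $1=V_m\le V_{m-1}\le\cdots\le V_0=W_m$ with $V_j=\ker(W_m\to W_j)$, where $V_{j-1}/V_j$ is the base group $B_j\cong G_j^{\ell_{j-1}}$ of the $j$-th wreath layer and $W_{j-1}=W_m/V_{j-1}$ acts on it by permuting the $\ell_{j-1}$ coordinates; since each $G_i$ is transitive, $W_{j-1}$ is transitive of degree $\ell_{j-1}$. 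Second, for a permutational wreath product one has $(A\wr B)^{\mathrm{ab}}\cong A^{\mathrm{ab}}\times B^{\mathrm{ab}}$ when $B$ is transitive, hence $W_m^{\mathrm{ab}}\cong\prod_{i=1}^m G_i/G_i'$ and $d_p(W_m^{\mathrm{ab}})=\sum_{i=1}^m d_p(G_i/G_i')\le d_0$ for every prime $p$.

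The plan is to bound $d(W_m)$ through its non-Frattini chief factors, grouped into crowns, using the standard reduction of the generation problem of a finite group to the structure of its crowns $R_G(A)$ (a suitable intersection of maximal subgroups attached to a chief factor $A$), together with the known formulas for $d(G/R_G(A))$ in terms of the crown size $\delta_G(A)$, the acting group, and—for abelian $A$—the relevant $H^1$ and the dimension of $A$. Two separations are crucial. (a) The abelian chief factors with trivial action: for these the crown is exactly the largest elementary abelian $p$-quotient $G'G^p$, so their total contribution to $d(W_m)$ is $\max_p d_p(W_m^{\mathrm{ab}})\le d_0$—this is where hypothesis (1) enters, and it is the only place where contributions genuinely accumulate over the infinitely many layers. (b) Every remaining (``essential'') chief factor of $W_m$ lies inside some base group $B_j$, hence is an essential chief factor of the section $W_j=W_m/V_j\cong G_j\wr W_{j-1}$, so the problem reduces to bounding the essential-crown contributions to $d(G_j\wr W_{j-1})$.

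The heart of the argument is an \emph{amplification} estimate for wreath products: there is a constant $c$, independent of $H$, $K$ and $n$, such that for every finite group $H$ and every transitive permutation group $K$ of degree $n$, the essential-crown contributions to $d(H\wr K)$ are bounded by $\max\!\big(d(K),\ \lceil d(H)/n\rceil+c\big)$. Intuitively, one generator of $H\wr K$ carries $n$ independent block components and the transitivity of $K$ lets them be recombined, so that each unit of the generation cost of $H$ is shared among the $n$ blocks; for nonabelian chief factors, where the naive bound is logarithmic in the crown size, it is exactly this sharing that prevents any unbounded growth. Granting this with $H=G_j$, $K=W_{j-1}$, $n=\ell_{j-1}$, hypothesis (2) gives $\lceil d(G_j)/\ell_{j-1}\rceil\le d$ for $j>1$, so the essential contributions coming from layer $j>1$ are at most $\max(d(W_{j-1}),\,d+c)$, while those from layer $1$ are bounded by the finite number $d(G_1)$. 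Combining with (a) and inducting on $m$ yields $d(W_m)\le k$ with $k=\max\!\big(d(G_1),\,d_0,\,d+c\big)$, which proves sufficiency.

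The main obstacle is the amplification estimate with a uniform $c$: one has to carry it out inside the ambient group $W_m$, compatibly with the crown decomposition, and—most delicately—control the nonabelian chief factors of the base group $G_j^{\ell_{j-1}}$ by genuinely exploiting the transitive action of $W_{j-1}$ instead of the trivial logarithmic bound, while simultaneously checking that no essential chief-factor type picks up enough repetitions across the layers $j$ to spoil the bound (equivalently, that the associated $\delta_G(A)$- and $H^1$-quantities stay bounded). It is natural to package these estimates via the Eulerian function $P_G(k)$ and its factorisation over a chief series—showing $P_{W_m}(k)>0$ for $k$ as above—which also sets up the refined analysis needed for the positive-finite-generation criterion mentioned in the abstract.
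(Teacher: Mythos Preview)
Your overall strategy---reduce to crowns, separate the central (trivial-action) abelian chief factors from the rest, and exploit an amplification estimate for wreath products---is the paper's approach. But the amplification estimate you state is too strong, and this makes your induction collapse.

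The problem is with abelian chief factors $M$ of $W=H\wr K$ that have nontrivial $W$-action yet lie in $B/B'$, where $B=H^n$. Here $B$ centralises $M$, so the $W$-action factors through $K$; however $\delta_W(M)$ receives contributions both from $K$-chief factors equivalent to $M$ and from copies of $M$ sitting inside the $\F_pK$-module $(H/H')^n$. The correct bound (Proposition~\ref{moduli}(2)) is
\[
h_W(M)\ \le\ h_K(M)+d_p(H/H'),
\]
and the additive term $d_p(H/H')$ cannot in general be absorbed into $\max\big(d(K),\lceil d(H)/n\rceil+c\big)$. In the iterated setting with $H=G_m$ and $K=W_{m-1}$ this reads $h_{W_m}(M)\le h_{W_{m-1}}(M)+d_p(\ol G_m)$, so a single nontrivial module $M$ can have a crown with representatives in \emph{every} layer $B_j/B_j'$, accumulating $d_p(\ol G_m)+d_p(\ol G_{m-1})+\cdots$. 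Hypothesis~(1) controls this because $\sum_i d_p(\ol G_i)\le d_0$, but only if one carries $h_{W_m}(M)$ as the inductive invariant; using $d(W_{m-1})$ as a black box yields merely $d(W_m)\le d(W_{m-1})+d_p(\ol G_m)$, which gives no uniform bound. This is why the paper's final bound is \emph{additive}, $d(W_\infty)\le e+d_0$, rather than the maximum $\max(d(G_1),d_0,d+c)$ you claim.

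Concretely, your separation (a)/(b) is not clean: the quantity $d_0$ from the abelianisation does not sit solely on the ``trivial-action'' side but leaks into the essential crowns as well, and your assertion that each essential crown ``lies inside some base group $B_j$'' ignores that one crown can span many $B_j/B_j'$. The fix is exactly what the paper does in its final theorem: prove first, by induction on $m$, that $h_{W_m}(M)\le e+d_p\!\big(\prod_{i\le m}\ol G_i\big)$ for every nontrivial irreducible $M$, and only then deduce $d(W_m)\le e+d_0$. The nonabelian crowns, which you leave as a heuristic, are handled separately via the probabilistic estimate $P_{L,N}(d)\ge 1/2$ for large $|N|$ (Proposition~\ref{L_t}); this part is genuinely different in character and is where the constant $k_0$ and the threshold index $i_0$ enter.
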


 Actually, we prove that there exists an absolute constant $k_0$ such that
\begin{equation*}
 d(W_\infty) \leq \max (d+2, d(W_{i_0}))+d\left(\prod_{i=1}^{\infty} (G_i/G_i')\right).
\end{equation*}
 where $i_0$ is the first index such that   $n_1 \cdots n_{i_0-1}\geq \log_{60} k_0$.
 Indeed $k_0$ is the smallest positive integer with the property: if a finite group $L$ has a unique minimal normal subgroup $N$
 and $|N|\geq k_0$, then $P_L(d)\geq \frac 1 2 P_{L/N}(d)$ for each $d\geq 2$, where $P_L(d)$ (resp. $P_{L/N}(d))$ denotes the probability of generating
 $L$ (resp. $L/N$) with $d$ elements. The existence of such a constant is ensured by the main theorem in \cite{fiore}.
 On the other hand we conjecture that for every $d\geq 2$ and every monolithic group $L$ with socle $N$
\begin{equation}\label{53}P_L(d)\geq \frac {53}{90} P_{L/N}(d)\end{equation}
(the equality holds if $L=\alt(6)$ and $d=2$).
If this were true, our result would become 
\begin{equation*}
d(W_\infty)\leq \max (d+2, d(G_{1}))+d\left(\prod_{i=1}^{\infty} (G_i/G_i')\right).
\end{equation*}
For example, the inequality (\ref{53}) is satisfied if the socle of $N$ is a direct power of alternating or sporadic simple groups
\cite{emil}: this implies that if every  non-abelian composition factor in the  $G_i$'s is alternating or sporadic, then $d(  W_\infty) \leq \max (d+2, d(W_{1}))+d(\prod_{i=1}^{\infty} (G_i/G_i')).$

The proof of Theorem \ref{main} relies on a generalization to the \lq\lq non-soluble'' case of some
 results in   \cite{andrea1} and \cite{andrea2}.  In that papers the author  considered the  generation of the wreath product  $W=H \wr K$ of two finite permutation groups $H$ and $K$ and a formula was found for $d(W)$  in the case where $H$ is soluble.
Later, in \cite{dv-l}, the minimal number of generators of a group $G$ was connected to some special homomorphic images of $G$
whose behavior can be studied with the help of an equivalence relation among the 
chief factors of $G$ 
 (see section \ref{gen-crowns} for more details). 
 Using these new techniques, we are able to control the ``non-abelian'' part of the problem and to produce a formula for $d(W)$ whenever the degree of $K$  is large enough.

Infinitely iterated wreath products appear in  literature with several motivations. For example they can be viewed as automorphism groups of suitably constructed rooted trees and play a relevant role in the study of self-similar groups (see e.g. \cite{grig1}, \cite{grig2}). Moreover, they provide a useful tool to construct examples and counterexamples in the context of profinite groups (see e.g. \cite{neumann}, \cite{seg}, \cite{just infinite}). 
 Bhattacharjee \cite{bhat}  and  Quick \cite{Q1} \cite{Q_2} considered   wreath products of non-abelian simple groups with transitive action and proved that 
 their inverse limit
 is generated by $2$ elements even with positive probability. 
Recall that a profinite group $G$ may be viewed as a probability space with respect to the normalized Haar measure
and that $G$ is called positively finitely generated (PFG) if for some $k$ a random $k$-tuple generates $G$ with positive
probability.
  From the papers of Bhattacharjee and Quick, it follows that an infinitely iterated wreath product of transitive groups $G_i$'s
 is PFG when every $G_i$ is a nonabelian simple group.
 However in \cite{just infinite} an example is given of an  infinitely  iterated wreath
product of transitive groups that is 2-generated but non PFG.

 In Proposition \ref{pfg}, 
 with the help of a result by Jaikin-Zapirain and Pyber \cite{pfg-j-p},
we will obtain a criterion that makes it possible to decide whether $W_\infty$ is PFG from information on the structure of
the transitive groups $G_i$'s and their degree $n_i$'s.


\section{Generating crown-based powers}\label{gen-crowns} 
 
Let $L$ be   a monolithic primitive group
 and let $A$ be its unique minimal normal subgroup. For each positive integer $k$,
 let $L^k$ be the $k$-fold direct
product of $L$.
 The \emph{crown-based power} of $L$ of size  $k$ is the subgroup $L_k$ of $L^k$ defined by
 $$L_k=\{(l_1, \ldots , l_k) \in L^k  \mid l_1 \equiv \cdots \equiv l_k \ {\mbox{mod}} A \}.$$  
Equivalently, $L_k=A^k \diag L^k$. 

Let, as usual,  $d(G) $ denote the minimal number of generators of a finite group $G$. In \cite{dv-l} it is proved that for every finite group $G$ there exists a monolithic group $L$ and a homomorphic image $L_k$ of $G$ such that 
\begin{enumerate}
\item $ d(L/\soc L) < d(G) $ 
\item $d(L_k) =d(G).$ 
\end{enumerate} 
 An $L_{k}$ with this property will be called a \emph{generating crown-based power} for $G$. 
 In \cite{dv-l} it is explained how  $d(L_{k})$ can be computed in terms of $k$ and the structure of $L$.  
A key ingredient when one wants to determine $d(G)$ from the behavior of 
the crown-based power  homomorphic images of $G$  is to evaluate for each monolithic group $L$  the maximal $k$ such that    $L_{k}$ is a   homomorphic image. 
  This  integer $k$ 
 comes from an  equivalence relation among the chief factors of $G$. More generally, following  \cite{paz},  we say that  
two irreducible $G$-groups $A$ and $B$  are  \emph{$G$-equivalent} and we put $A \sim_G B$, if there is an
isomorphism $\Phi: A\rtimes G \rightarrow B\rtimes G$ such that the following diagram commutes:

\begin{equation*}
\begin{CD}
1@>>>A@>>>A\rtimes G@>>>G@>>>1\\
@. @VV{\phi}V @VV{\Phi}V @|\\
1@>>>B@>>>B\rtimes G@>>>G@>>>1
\end{CD}
\end{equation*}

\
Note that two $G$\nobreakdash-isomorphic
$G$\nobreakdash-groups are $G$\nobreakdash-equivalent. In the particular case where   $A$ and $B$ are abelian the converse is true: 
if $A$ and $B$ are abelian and $G$\nobreakdash-equivalent, then $A$
and $B$ are also $G$\nobreakdash-isomorphic.
 It is proved that two  chief factors $A$ and $B$ of $G$ are  $G$-equivalent if and only if  either they are  $G$-isomorphic between them or there exists a maximal subgroup $M$ of $G$ such that $G/\core_G(M)$ has two minimal normal subgroups $N_1$ and $N_2$ 
$G$-isomorphic to $A$ and $B$ respectively. For example, the  minimal normal subgroups of $L_k$ are all $L_k$-equivalent.

  Let $A=X/Y$ be a chief factor of $G$. A complement $U$ to $A$ in $G$ is a subgroup $U $ of $G$ such that $UX=G$ and $U \cap X=Y$. We say that   $A=X/Y$ is Frattini if  $X/Y$ is contained in the Frattini subgroup of $G/Y$; this is equivalent to say that $A$ is abelian and there is no complement to $A$ in $G$. 
The  number $\delta_G(A)$  of non-Frattini chief factors $G$-equivalent to $A$   in any chief series of $G$  does not depend on the series. Now, 
 we denote by  $L_A$  the  \emph{monolithic primitive group  associated to $A$},  
 that is 
$$L_{A}=
  \begin{cases}
   A\rtimes (G/C_G(A)) & \text{ if $A$ is abelian}, \\
   G/C_G(A)& \text{ otherwise}.
  \end{cases}
$$
If $A$ is a non-Frattini chief factor of $G$, then $L_A$ is a homomorphic image of $G$. 
 More precisely,  there exists 
 a normal subgroup $N$ such that $G/N \cong L_A$ and $\soc (G/N) \sim_G A$ (in the following we will sometimes identify $\soc L_A$ with  $A$ as $G$-groups). Consider now  all the normal subgroups $N$ with the property that  $G/N \cong L_A$ and $\soc (G/N) \sim_G A$: 
  the intersection $R_G(A)$ of all these subgroups has the property that  $G/R_G(A)$ is isomorphic to the crown-based  power $(L_A)_{\delta_G(A)}$  ($L_{A,  \delta_G(A)}$ for short).    
The socle $I_G(A)/R_G(A)$ of $G/R_G(A)$ is called the $A$-crown of $G$ and it is  a direct product of $\delta_G(A)$ minimal normal subgroups $G$-equivalent to $A$.  Later we will use the facts that 
$$I_G(A)=\{g \in G \mid g \textrm{ induces an inner automorphism on } A\}$$
 and $A \sim_G B$ implies $I_G(A)=I_G(B)$. In particular, if $A$ and  $B$ are chief factors of $G$ and $A \sim_G B$, then   $R_G(A)=R_G(B)$ and $L_A \cong L_B$.  

Note that if $L_k$ is a homomorphic image of $G$ for some $k\geq 1$  then  $L$ is  associated to a non-Frattini chief factor $A$ of $G$ ($L \cong L_A$) and $k \leq \delta_G(A)$. 
 If $L_{A,k}$ is a generating crown-based power  then   $L_{A,  \delta_G(A)}$ has the same property: in this case, by abuse of notation,  we will say 
that  $A$  is a \emph{generating chief factor} for $G$. 

The minimal number  of generators of a generating crown-based power can be computed when $A$ is abelian with the help of the following formula:
 for an irreducible $G$-module $M$,  set 
$$r_G(M)=\dim_{\End_G(M)}M  \quad \quad s_G(M)=\dim_{\End_G(M)} H^1(G,M)$$
 and define

$$h_{G}(M)= 
  \begin{cases}
  &\delta_G(M) \quad \text{ if $M$ is a  trivial } G\textrm{-module}, \\
 &  \left[\frac{s_G(M)-1}{r_G(M)}\right]+2
 \quad \text{ otherwise}.
  \end{cases}
$$

Note that, as $G/R \cong L_{M, k}$ where $R=R_G(M)$ and $k=\delta_G(M)$, we have 
 $\delta_G(M)=\delta_{G/R}(M)=\delta_{L_{M, k}}(M)$. Moreover,  if $\delta_G(M) >0$, then $R\leq C_G(M)$ and 
$\dim_{\End_G(M)} H^1(G,M)= \delta_G(M)+ \dim_{\End_G(M)} H^1(G/C_G(M),M) $
 (see e.g. [1.2] in \cite{andrea2}) and therefore 
$r_G(M)=r_{G/R}(M)$ and $s_G(M)=s_{G/R}(M).$ 
 We conclude that   if $\delta_G(M) >0$, then 
\begin{equation}\label{R}
h_G(M)= 
h_{ L_{M, \delta_G(M)}}(M).
\end{equation}
 
>From a result by Gasch\"utz \cite[Satz 2]{g2}, we have that  either $h_G(M)=d(L_{M,\delta_G(M)})$  or $h_G(M) < d(L_M/M).$
Therefore we have the following: 
\begin{prop}
If there exists  an abelian generating chief factor of $A$ of $G$, 
 then 
\[ d(G)=h_G(A).\]
\end{prop}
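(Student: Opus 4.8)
The plan is to read $d(G)$ off from the hypothesis and then match it with $h_G(A)$ through Gasch\"utz's computation of the generator number of a crown-based power with abelian socle. In a formula, the argument should run
\[
d(G)=d\bigl(L_{A,\delta_G(A)}\bigr)=\max\bigl(d(L_A/\soc L_A),\,h_G(A)\bigr)=h_G(A),
\]
where the first equality is part of the definition of a generating chief factor, the second is Gasch\"utz's formula (combined with $(\ref{R})$), and the last holds because $d(L_A/\soc L_A)$ turns out to be the \emph{strictly} smaller of the two numbers.

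First I would unwind the hypothesis. Saying that $A$ is an abelian generating chief factor of $G$ means, by definition, that $L_{A,\delta_G(A)}$ is a generating crown-based power for $G$; in particular $L_{A,\delta_G(A)}$ is a homomorphic image of $G$ with $d(L_{A,\delta_G(A)})=d(G)$ and $d(L_A/\soc L_A)<d(L_{A,\delta_G(A)})$. Two elementary remarks are worth recording here: since some $L_{A,k}$ with $k\ge 1$ is a homomorphic image of $G$, the chief factor $A$ is non-Frattini, so that $\delta_G(A)\ge 1>0$ and the reduction $(\ref{R})$ is available; and since $A$ is abelian, $\soc L_A=A$, so the strict inequality above reads simply $d(L_A/A)<d(G)$.

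Then I would apply the consequence of Gasch\"utz's theorem \cite[Satz 2]{g2} recorded just before the statement: either $h_G(A)=d(L_{A,\delta_G(A)})$, or $h_G(A)<d(L_A/A)$. In the first case we are immediately done, since then $h_G(A)=d(L_{A,\delta_G(A)})=d(G)$. To rule out the second case I would use that Gasch\"utz in fact computes $d(L_{A,\delta_G(A)})=\max\bigl(d(L_A/A),\,h_G(A)\bigr)$: were $h_G(A)<d(L_A/A)$, this maximum would equal $d(L_A/A)=d(L_A/\soc L_A)$, contradicting the strict inequality $d(L_A/\soc L_A)<d(L_{A,\delta_G(A)})$ that is built into the notion of a generating crown-based power. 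I expect this last step --- excluding the \lq\lq exceptional'' alternative --- to be the only genuine point: it rests on using the \emph{strict} inequality $d(L_A/\soc L_A)<d(L_{A,\delta_G(A)})$ coming from the hypothesis, rather than the automatic $d(L_A/\soc L_A)\le d(L_{A,\delta_G(A)})$; everything else is bookkeeping with the crown-theoretic machinery of Section~\ref{gen-crowns}.
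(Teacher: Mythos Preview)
Your argument is correct and is exactly the unpacking of the paper's one-word proof (``Therefore''): you use the Gasch\"utz dichotomy together with the generating conditions $d(L_A/A)<d(G)=d(L_{A,\delta_G(A)})$, and you correctly note that ruling out the alternative $h_G(A)<d(L_A/A)$ needs the full max-formula $d(L_{A,\delta_G(A)})=\max\bigl(d(L_A/A),h_G(A)\bigr)$, which is precisely what Gasch\"utz's result gives.
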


In our discussion we will employ different arguments according to the existence or not of an abelian generating chief factor. 
In the first case it is useful to notice that 
\begin{prop}\label{ab-crown}
Let  $d(I_G)$ be the minimal number  of generators of the augmentation ideal of $\Z G$ as a $G$-module. If $G$ has an abelian generating chief factor  $A$, then 
\[d(G)=d(I_G)=h_G(A).\]
\end{prop}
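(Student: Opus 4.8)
The plan is to leverage the previous proposition, which already gives $d(G)=h_G(A)$ whenever $A$ is an abelian generating chief factor, and reduce the remaining assertion $d(G)=d(I_G)$ to a computation of $d(I_G)$ in terms of the invariants $r_G$, $s_G$, $\delta_G$. First I would recall the structure of the augmentation ideal $I_G$ as a $\Z G$-module: its irreducible quotients (more precisely, the composition factors of $I_G$ visible through its head) are exactly the irreducible $G$-modules $M$ for which $H^1(G,M)\neq 0$, together with the trivial module when $G\neq 1$. By a Gasch\"utz-type / Roggenkamp-type argument (or directly from the cohomological description of generators of a relation module), the minimal number of generators of $I_G$ as a $G$-module is governed by the maximum, over all irreducible $G$-modules $M$, of a quantity built from $\delta_G(M)$, $r_G(M)=\dim_{\End_G(M)}M$ and $s_G(M)=\dim_{\End_G(M)}H^1(G,M)$. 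Concretely I expect the identity
\begin{equation*}
d(I_G)=\max_{M}\, h_G(M),
\end{equation*}
the maximum ranging over the irreducible $G$-modules $M$ with $\delta_G(M)>0$, where $h_G(M)$ is exactly the function defined just before \eqref{R}. This is essentially the content of the results of \cite{dv-l} relating $d(G)$ to crown-based powers, transcribed to the module $I_G$.

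Next I would identify this maximum with $h_G(A)$. Since $A$ is a \emph{generating} chief factor, $L_{A,\delta_G(A)}$ is a generating crown-based power for $G$, so by the previous proposition $d(G)=h_G(A)$; and for every non-Frattini abelian chief factor $M$ one has $h_G(M)\le d(L_{M,\delta_G(M)})\le d(G)$ by the cited theorem of Gasch\"utz \cite[Satz 2]{g2} together with $d(G)\ge d(L_{M,\delta_G(M)})$ (the latter being a homomorphic image of $G$). Hence $h_G(A)=\max_M h_G(M)$, where again $M$ runs over the irreducible $G$-modules with $\delta_G(M)>0$. Combining with the displayed formula for $d(I_G)$ gives $d(I_G)=h_G(A)=d(G)$.

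The only remaining point, and the one I expect to be the genuine technical heart of the argument, is to justify the formula $d(I_G)=\max_M h_G(M)$. The inequality $d(I_G)\le d(G)$ is standard, since $I_G$ is generated as a $G$-module by the elements $g-1$ for $g$ in a generating set of $G$ (in fact by $g_1-1,\dots,g_{d(G)}-1$, so $d(I_G)\le d(G)$ trivially). The subtle direction is the lower bound: one must show that the $A$-isotypic part of the head of $I_G$ forces at least $h_G(A)$ generators, i.e. that $\dim_{\End_G(A)}\bigl(I_G/I_G\,\mathrm{rad}\bigr)$ restricted to the $A$-homogeneous component equals $r_G(A)\cdot\delta_G(A)+s_G(A)$-type expression that yields precisely $\lceil\,\cdot\,\rceil$ after dividing by $r_G(A)$; this uses Nakayama together with the identification of $\operatorname{Hom}_G(I_G,A)$ with $H^1(G,A)$ and a careful count incorporating the $\delta_G(A)$ copies of $A$ in the $A$-crown. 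For the equality $d(G)=d(I_G)$ under the standing hypothesis, one can also argue more directly: the generating crown-based power $L_{A,\delta_G(A)}$ has augmentation ideal whose number of $G$-generators equals $h_G(A)$ by \eqref{R}, and since $I_G$ surjects onto $I_{L_{A,\delta_G(A)}}$ as $G$-modules while the abelian generating chief factor makes this surjection "tight" on the relevant isotypic component, no cancellation occurs. The main obstacle is thus the bookkeeping showing that the abelian generating chief factor $A$ dominates all of $d(I_G)$ exactly as it dominates $d(G)$; once that is in place, Proposition \ref{ab-crown} follows.
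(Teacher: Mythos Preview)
Your approach is essentially the same as the paper's: the key identity is $d(I_G)=\max_M h_G(M)$ over irreducible $G$-modules, together with the trivial bound $d(I_G)\le d(G)$ and $h_G(A)=d(G)$ from the preceding proposition. The paper simply invokes Cossey--Gruenberg--Kov\'acs \cite[Theorem~3]{pr} for the max formula rather than rederiving it, and then observes $d(I_G)\ge h_G(A)=d(G)\ge d(I_G)$; your extra step of showing the maximum is \emph{attained} at $A$ is unnecessary, since the inequality $d(I_G)\ge h_G(A)$ already suffices.
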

\begin{proof}
 By a result of Cossey, Gruenberg and Kov\'acs \cite[Theorem 3]{pr} 
$$d(I_G)= \max \{ h_G(M)\mid  M \ \textrm{irreducible }  G\textrm{-module} \},$$ thus $d(I_G) \geq h_G(A) =d(G).$ Since $d(I_G)\leq d(G)$, we have an equality. 
\end{proof}

  Theorem \ref{main} will be derived by an extension to the non-abelian crowns of the  
 following:  
\begin{prop}[Proposition 1 \cite{andrea2}]\label{a2-prop1}
If $H$ is a finite group and $G$ is a transitive permutation group of degree $n$, then 
 \[ d(I_{H \wr G}) = \max  \left\{ d(I_{H/H'\wr G}), \left[ \frac{d(I_H)-2}{n} \right] +2 \right\}. \]
\end{prop}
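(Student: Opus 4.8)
\emph{Proof plan.}
The idea is to use the description of $d(I_X)$ recalled in the proof of Proposition~\ref{ab-crown}: for every finite group $X$ one has $d(I_X)=\max_M h_X(M)$, the maximum ranging over the irreducible $X$-modules. Write $W=H\wr G=B\rtimes G$ with base group $B=H^n$ and $G$ transitive on the $n$ coordinates, so that the identity to be proved reads $\max_M h_W(M)=\max\{\, d(I_{H/H'\wr G}),\ [(d(I_H)-2)/n]+2\,\}$. The inequality $\ge$ is almost immediate: $H/H'\wr G=W/(H')^n$ is a quotient of $W$, whence $d(I_{H/H'\wr G})\le d(I_W)$; and for the second term, if $d(I_H)=h_H(N)$ with $N$ a non-trivial irreducible $H$-module, I would take $\widetilde N=\operatorname{Ind}_{H\wr G_1}^{W}N$, where $G_1$ is a point stabiliser and $N$ is inflated to $H\wr G_1$ with $G_1$ acting trivially. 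A Clifford-theory argument shows that $\widetilde N$ is irreducible, with $\widetilde N|_B$ the sum of the $G$-conjugates of $N\otimes 1\otimes\cdots\otimes 1$ and with $\End_W(\widetilde N)=\End_H(N)$, so that $r_W(\widetilde N)=n\,r_H(N)$; Shapiro's lemma together with the inflation-restriction sequence then gives $H^1(W,\widetilde N)\cong H^1(H,N)$ as $\End_H(N)$-modules, hence $s_W(\widetilde N)=s_H(N)$, and therefore $h_W(\widetilde N)=[(s_H(N)-1)/(n\,r_H(N))]+2=[(h_H(N)-2)/n]+2=[(d(I_H)-2)/n]+2$. If instead the value $d(I_H)$ is attained only by the trivial module, then $[(d(I_H)-2)/n]+2\le d(I_H)\le d(I_{H/H'})\le d(I_{H/H'\wr G})$ and the second term is dominated by the first.

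For the inequality $\le$ I would fix an irreducible $\mathbb{F}_p W$-module $M$ and, applying Clifford's theorem to $B$, write $M|_B$ as a sum of $G$-conjugates of $N_1\otimes\cdots\otimes N_n$ with each $N_i$ an irreducible $H$-module. Using~(\ref{R}) and the identity quoted just before it, decompose $s_W(M)=\dim_{\End}H^1(W,M)=\delta_W(M)+t_W(M)$, where $t_W(M)=\dim_{\End}H^1(W/C_W(M),M)$ depends only on $M$ and $W/C_W(M)$. The chief factors of $W$ that are $W$-equivalent to $M$ fall into two classes: those on which $B$ acts trivially, which are inflated from chief factors of $G=W/B$ and contribute to $\delta_W(M)$ exactly as the corresponding factors contribute in $H/H'\wr G$; and those lying below $B$, which — being normal in $W$ — are $W$-isomorphic to the ``$n$-coordinate copy'' $\widetilde C$ of a chief factor $C$ of $H$, so they occur only when $M\cong\widetilde C$, in which case the computation of the first paragraph (with $C$ in place of $N$) gives $\delta_W(\widetilde C)=\delta_H(C)$ and $r_W(\widetilde C)=n\,r_H(C)$ — the growth of the endomorphism ring by the orbit length $n$ being precisely what produces the division by $n$. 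The term $t_W(M)$ is handled by the same device: if $(H')^n\le C_W(M)$ then $t_W(M)=t_{H/H'\wr G}(M)$, while otherwise some $N_i$ is non-trivial with $H'$ acting non-trivially and $H^1(W,M)$ is controlled by the groups $H^1(H,N_i)$, again with an $n$-fold growth of the ambient endomorphism ring; a module whose support meets two or more coordinates has strictly larger $r_W(M)$ and never dominates. Putting these estimates together, for every $M$ the value $h_W(M)$ is bounded by $d(I_{H/H'\wr G})$ (when $M$ factors through $W/(H')^n$ and is not of the form $\widetilde C$ for a non-trivial chief factor $C$ of $H$) or by $[(d(I_H)-2)/n]+2$ (in all remaining cases, including the delicate one in which $M=\widetilde C$ factors through $W/(H')^n$ but nevertheless picks up $\delta_H(C)$ extra copies among the chief factors of $W$ below $B$). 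Thus $\max_M h_W(M)\le\max\{d(I_{H/H'\wr G}),[(d(I_H)-2)/n]+2\}$, and equality follows.

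The heart of the argument, and the step I expect to be the main obstacle, is the pair of explicit module computations invoked above: determining $\End_W(M)$ and $H^1(W,M)$ for the induced wreath-type modules $\widetilde C$ with the correct factor of $\tfrac1n$ — this is exactly where the floor function and the additive $2$ enter, via $h_H(N)-2=[(s_H(N)-1)/r_H(N)]$ — and matching the number of complemented chief factors of $W$ lying below $B$ and $W$-isomorphic to $M$ with $\delta_H$ of the corresponding chief factor of $H$. One must in addition check that every irreducible $W$-module is absorbed by exactly one of the two terms, keeping careful track of those modules that factor through $W/(H')^n$ yet acquire extra crown-multiplicity from inside $(H')^n$, so that the bound $h_W(M)\le\max\{d(I_{H/H'\wr G}),[(d(I_H)-2)/n]+2\}$ holds without exception.
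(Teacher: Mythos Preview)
The paper does not prove this statement: it is quoted as Proposition~1 of \cite{andrea2} and used as a black box, so there is no proof in the present paper to compare yours against.

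That said, your plan follows the natural strategy, and pieces of it reappear elsewhere in this paper. Your induced module $\widetilde N$ is exactly the $W$-group $N^n$ of Proposition~\ref{H-groups}; the Clifford-theoretic identification $\End_W(N^n)\cong\End_H(N)$ and the Shapiro/inflation--restriction computation $H^1(W,N^n)\cong H^1(H,N)$ for non-trivial $N$ are correct, giving $r_W(N^n)=n\,r_H(N)$, $s_W(N^n)=s_H(N)$, and hence the lower bound via the identity $\lfloor\lfloor x\rfloor/n\rfloor=\lfloor x/n\rfloor$. The matching of crown multiplicities $\delta_W(A^n)=\delta_H(A)$ for non-central $A$ inside $H'$ is precisely Proposition~\ref{chief}(1), and the bound $h_W(A^n)\le\lceil(h_H(A)-2)/n\rceil+2$ is Proposition~\ref{moduli}(1), which itself cites \cite[step~2.5]{andrea2}.

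Your case split for the upper bound is slightly muddled, though. The clean dichotomy is simply whether $B'=(H')^n$ centralizes $M$. If $B'\le C_W(M)$ then $M$ is a $W/B'$-module, and Proposition~\ref{chief}(1) says that no chief factor of $W$ inside $B'$ is $W$-equivalent to any chief factor of $W/B'$; hence $\delta_W(M)=\delta_{W/B'}(M)$ and $h_W(M)=h_{W/B'}(M)\le d(I_{H/H'\wr G})$ directly, since $W/B'=H/H'\wr G$. The ``delicate case'' you flag --- an $M$ factoring through $W/B'$ yet acquiring extra crown-multiplicity from chief factors below --- therefore does not occur as a separate difficulty: the extra copies coming from $B/B'$ are already present in $H/H'\wr G$ and are counted there, while copies from inside $B'$ are ruled out. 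If $B'\not\le C_W(M)$, your module computation handles it. Modules with $\delta_W(M)=0$ are covered by $h_W(M)\le 2$ as in Proposition~\ref{moduli}(4). With this clarification your outline is essentially the proof in \cite{andrea2}.
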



\section{Crowns in wreath products}

Let $H$ be a finite group and $K$ a transitive group of degree $n$
and  denote by $W=H \wr K= H^n \rtimes K$  the (permutational) wreath product of $H$  and  $K$, 
where $K$ permutes the components of the base subgroup $H^n= H_1 \times \cdots \times H_n$.  

In this section we want to study the relation between the chief factors of $H$ and the chief factors of $W$. First note that if $A$ is an $H$-group then $A^n$ can be seen as a $W$-group where $H^n$ acts componentwise and $K$ permutes the components of the elements. When dealing with $A^n$ as a $W$-group we will usually refer to this action. We say that an $H$-group $A$ is irreducible if the only
 $H$-groups contained in $A$ are $A$ and $\{ 1\}$; we say that an $H$-group is trivial if the action of $H$ on $A$ is the trivial one, that is $H=C_H(A)$.

 \begin{prop}\label{H-groups} 
Let $A$ and $B$ be irreducible $H$-groups. 
\begin{enumerate}
	\item If $A$ is a non-trivial  $H$-group, then $A^n$  is an irreducible non-trivial  $W$-group. 
	\item If $A \sim_H B$ then $A^n \sim_W B^n$. 
	\item  If $A$ and $B$ are non-trivial  $H$ groups and $A \nsim_H B$, then $A^n \nsim_W B^n$. 
\item If $A$ is a non-central chief factor of $H$ and $L$ is the associated monolithic group, then $A^n$ is a chief factor of $W$ and the monolithic primitive group associated to $A^n$ is isomorphic to $ L \wr K$. 
\end{enumerate}
\end{prop}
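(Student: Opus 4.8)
The plan is to treat the four parts in the order (1), (2), (4), (3), since (4) needs only (1) and (3) is the delicate one.

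For (1): if $A^n$ were a trivial $W$-group then $H_1\le C_W(A^n)$ would centralise $A_1$, contrary to the assumption that $A$ is non-trivial, so $A^n$ is non-trivial. For irreducibility, let $B\ne\{1\}$ be a $W$-invariant subgroup of $A^n$. Since $H_i$ normalises $B$ and acts only on the $i$-th coordinate, $[B,H_i]\le B\cap A_i$ and $B\cap A_i$ is an $H$-invariant subgroup of $A\cong A_i$, hence $\{1\}$ or $A_i$. If $B\cap A_i=A_i$ for some $i$, then $B\cap A_j=A_j$ for all $j$ ($K$ being transitive), whence $B=A^n$. If $B\cap A_i=\{1\}$ for every $i$, then $[B,H^n]=\{1\}$, so $H$ fixes each $\pi_i(B)$ pointwise; since $B\ne\{1\}$ some $\pi_i(B)\ne\{1\}$ is an $H$-invariant subgroup of the irreducible group $A$, forcing $\pi_i(B)=A$ and $H=C_H(A)$, a contradiction. (Non-triviality of $A$ enters twice here, which is why it is assumed in (1) and (3) but not in (2).)

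Parts (2) and (4) rest on the identification $A^n\rtimes W\cong(A\rtimes H)\wr K$ (and the analogue for $B$), under which $A^n$ is the product of the ``$A$-parts'' of the wreath base, $H^n$ the ``$H$-parts'' and $K$ the top group. For (2): an isomorphism $\Phi_0\colon A\rtimes H\to B\rtimes H$ witnessing $A\sim_H B$ induces $\Phi_0\wr\mathrm{id}_K\colon(A\rtimes H)\wr K\to(B\rtimes H)\wr K$ (apply $\Phi_0$ coordinatewise on the base, the identity on $K$); this is an isomorphism taking $A^n$ onto $B^n$ and inducing the identity on the quotient $W=H^n\rtimes K$, so the defining commutative diagram of $\sim_W$ holds. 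For (4): write $A=X/Y$ with $X/Y$ minimal normal in $H/Y$; then $Y^n\trianglelefteq X^n\trianglelefteq W$, $X^n/Y^n\cong A^n$ as $W$-groups, and the minimality of $A^n$ in $W/Y^n$ is precisely the irreducibility from (1), so $A^n$ is a chief factor of $W$. A direct computation gives $C_W(A^n)=C_H(A)^n$ — an element $(h_1,\dots,h_n)\sigma$ centralising $X^n/Y^n$ must have $\sigma=1$ (otherwise it moves coordinates, impossible since $X>Y$) and then each $h_i\in C_H(X/Y)$ — so $W/C_W(A^n)\cong(H/C_H(A))\wr K$; comparing with the definition of $L_A$ yields $L_{A^n}\cong A^n\rtimes\bigl((H/C_H(A))\wr K\bigr)\cong L_A\wr K$ if $A$ is abelian, and $L_{A^n}\cong(H/C_H(A))\wr K\cong L_A\wr K$ if $A$ is non-abelian.

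For (3) I would argue by contraposition: assuming $A^n\sim_W B^n$, produce $A\sim_H B$. Fix an isomorphism $\Phi\colon A^n\rtimes W\to B^n\rtimes W$ inducing the identity on the quotient $W$, put $\phi=\Phi|_{A^n}\colon A^n\to B^n$, and for $w$ in the complement $W$ write $\Phi(w)=c_w\,w$ with $c_w\in B^n$; then $c\colon W\to B^n$ is a $1$-cocycle and $\phi(w\cdot a)={}^{c_w}\!\bigl(w\cdot\phi(a)\bigr)$ for all $w$ and $a$, so $\phi$ is an isomorphism of $W$-groups from $(A^n,\cdot)$ to $(B^n,\bullet)$ where $w\bullet b:={}^{c_w}(w\cdot b)$. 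The key observation is that conjugation by elements of $B^n$ respects the decomposition $B^n=B_1\times\cdots\times B_n$, so the $\bullet$-action permutes the $B_i$ exactly as the untwisted action does. Using this — together with $C_A(H)=C_B(H)=\{1\}$ in the abelian case, and the transitivity of $H$ on the simple factors of $A$ in the non-abelian case — one shows that $\phi$ maps the coordinate subgroups $\{A_1,\dots,A_n\}$ onto $\{B_1,\dots,B_n\}$, say $\phi(A_i)=B_{\pi(i)}$ with $\pi$ commuting with the action of $K$; inspecting compatibility with the individual $H_i$ then gives that either $\pi=\mathrm{id}$ or $I_H(A)=H$ (if $\pi(i)\ne i$, the $H_i$-equivariance forces the $H$-action on $A$ to be by inner automorphisms). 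In the first case $\phi|_{A_1}\colon A_1\to B_1$ and the first-coordinate cocycle $h\mapsto\pi_1(c_h)$ (a $1$-cocycle $H\to B$) assemble into an isomorphism $A\rtimes H\to B\rtimes H$ over $H$, i.e.\ $A\sim_H B$; when $A$ is abelian the twist is inner and hence trivial, so $\phi$ is already a $W$-isomorphism and $\phi|_{A_1}$ is an $H$-isomorphism $A\to B$. In the remaining case $A$, and likewise $B$ (since $A^n\sim_W B^n$ forces $I_W(A^n)=I_W(B^n)$, hence $I_H(A)=I_H(B)$), is simple with $H\twoheadrightarrow\inn(A)$, $A$ and $B$ are isomorphic as abstract groups (from $\phi(A_1)=B_{\pi(1)}$), and a short direct computation shows that any two such ``inner-type'' irreducible $H$-groups are automatically $H$-equivalent. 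The main obstacle is exactly this cocycle bookkeeping in the non-abelian case: controlling how $c$ interacts with the coordinate subgroups of $B^n$ and disposing of the degenerate inner-type configuration.
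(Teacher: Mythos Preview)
Your proposal is correct and follows essentially the same line as the paper's proof. Parts (1), (2), (4) match almost verbatim; in part (3) your cocycle/twisted-action packaging is a cleaner way to say exactly what the paper does with explicit diagram chases, leading to the same dichotomy (either $\phi$ respects the coordinate decomposition, giving $A\sim_H B$ by restriction, or the action is of inner type and one invokes the lemma that $I_H(A)=H$ forces $A\sim_H A^*$). The only noticeable cosmetic difference is in the abelian subcase: the paper restricts $\phi$ to $C_{A^n}(K)=\diag(A^n)\cong A$ to get an $H$-isomorphism directly, whereas you characterize the $A_i$ as $C_{A^n}\bigl(\prod_{j\neq i}H_j\bigr)$ to force $\pi=\mathrm{id}$ and then observe the cocycle is trivial; both arguments use $C_A(H)=\{1\}$ and arrive at the same conclusion.
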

\begin{proof}
\begin{enumerate}
\item 
Let $N \neq 1$ a $W$-group contained in $A^n=A_1 \times \cdots \times A_n$ and let $1 \neq (x_1, \ldots , x_n) \in N $ be a non trivial element. As $K$ is transitive on the components, we can assume $x_1 \neq 1$. 
 Note that $C_A(H)$ is a proper $H$-subgroup of $A$, hence $C_A(H)=1$ by irreducibility of $A$. 
Thus $[x_1, H] \neq 1$ and in particular  $[x_1, H]$ is a non-trivial $H$-subgroup of $A$,  hence   $[x_1, H]=A.$ Therefore $[  (x_1, \ldots , x_n), H_1] =[x_1,H] \times \{1\} \times \cdots \times \{1\} =A_1$ is contained in $N$ and, by the transitivity of the action of $K$, we conclude that $A^n \leq N$. 
\item 
Let $A \sim_H B$: there exists an isomorphism 
 $\Phi: A\rtimes H \rightarrow B\rtimes H$ such that the following diagram commutes:

\begin{equation}\label{H}
\begin{CD}
1@>>>A@>>>A\rtimes H@>>>H@>>>1\\
@. @VV{\phi}V @VV{\Phi}V @|\\
1@>>>B@>>>B\rtimes H@>>>H@>>>1
\end{CD}
\end{equation}

\

Now 
define $\Psi: A^n\rtimes W\rightarrow B^n\rtimes W$ by the position
$$((a_1, \ldots, a_n) (h_1, \ldots, h_n) k)^\Psi  =  (a_1^\phi , \ldots, a_n^\phi ) (h_1^\Phi , \ldots, h_n^\Phi ) k .$$ 
 Thus $\Psi$ is a well defined isomorphism for which the following diagram is commutative:

\begin{equation}\label{W}
\begin{CD}
1@>>>A^n@>>>A^n\rtimes W@>>>W@>>>1\\
@. @VV{\psi}V @VV{\Psi}V @|\\
1@>>>B^n@>>>B^n\rtimes W@>>>W@>>>1
\end{CD}
\end{equation}
\
where $\psi$ is the restriction to $A^n$ of $\Psi$, 
 and therefore $A^n \sim_W B^n$. 
 
\item Assume, by contradiction, that $A^n \sim_W B^n$. 
We first consider the case where $A$ and $B$ are abelian. 
 Then the $W$-equivalence relation is simply  the $W$-isomorphism relation  and $A^n \sim_W B^n$
 implies that there exists a $W$-isomorphism $\psi: A^n \rightarrow B^n$. Note that $C_{A^n}(K)= \diag(A^n) \cong A$ and 
 similarly $C_{B^n}(K)= \diag(B^n) \cong B$. Since $\psi$ is a $W$-isomorphism,
 the restriction of $\psi$ to $C_{A^n}(K)$ is a $W$-isomorphism between $C_{A^n}(K)=\diag(A^n)$ and $C_{B^n}(K)=\diag(B^n)$. 
 This implies that there is 
  an $H$-isomorphism between $A$ and $B$, and we conclude that $A \sim_H B$. 
 
 We now consider the case where $A$ and $B$ are non-abelian. Assume that the  diagram  (\ref{W}) is commutative.  
  First of all we note that the minimal normal subgroups of $A^n\rtimes H^n$ contained in $A^n$ are the subgroups $A_i$. 
 Moreover the $A_i^\psi$ are minimal normal subgroups of $(A^n\rtimes H^n)^\Psi =B^n\rtimes H^n$ contained in $(A^n)^\psi=B^n$. It follows that  $A_i^\psi=B_j$
 for some $j$. In particular $A \cong B$ as groups.   
 
 If $A_1^\psi =B_1$, then consider that 
 $[\prod_{i>1}A_i, H_1]=1$ implies 
 $$\left[\prod_{i>1}A_i, H_1\right]^\Psi=\left[\prod_{i>1}A_i^\Psi, H_1^\Psi\right]=\left[\prod_{i>1}B_i, H_1^\Psi\right]=  1$$ 
 thus $H_1^\Psi \leq C_{B^n\rtimes H^n}(\prod_{i>1}B_i)
$. Moreover,   $H_1 ^\Psi \leq B^n\rtimes H_1$ since the right part of the diagram (\ref{W}) commutes, and therefore 
 $$H_1^\Psi \leq C_{B^n\rtimes H^n}\left(\prod_{i>1}B_i\right)\cap  \left( B^n\rtimes H_1 \right)\leq  B_1\rtimes H_1 .$$
 It follows that the following diagram commutes   
 \begin{equation*}
\begin{CD}
1@>>>A_1@>>>A_1\rtimes H_1@>>>H_1@>>>1\\
@. @VV{\psi}V @VV{\Psi}V @|\\
1@>>>B_1@>>>B_1\rtimes H_1@>>>H_1@>>>1
\end{CD}
\end{equation*}
  and $A_1 \sim _{H_1} B_1$.  Since the action of $H$ on $A$ and $B$ is equal to the action of $H_1$ on $A_1$ and $B_1$ respectively, $A\sim _H B$ and we are done. 
 
 We are left with the case $A_1^\psi \neq B_1$; then there exists $j \neq 1$ such that $A_j^\psi=B_1$.
 Note that we can not argue as above, since now 
 $A_1^\psi \rtimes H_1^\Psi$ is contained in $ B_1 B_j \rtimes H_1 $ 
but not in   $  B_j \rtimes H_1 $
 and hence  we can not simply ``restrict'' the diagram (\ref{W}) to one component.

Since the right part of the diagram (\ref{W}) commutes, for every $h \in H_1 $ there exist unique elements $b_i \in B$ such that   $h^\Psi=(b_1, \ldots, b_n)h$: 
 we define the map 
$\beta: H_1 \mapsto B_1$ by sending $h $ to the element $h^\beta=(b_1, 1 \ldots, 1)$. 
 Then  $[H_1,A_j]=1$ implies $[H_1^\Psi , B_1]=1$ and
hence $h^\beta h $ commutes with  every element of $ B_1$.
It  follows that the map $\Theta: A_j \rtimes H_1 \mapsto B_1 \rtimes H_1$ defined by $( a_j h)^\Theta =a_j^\psi h^\beta h$ is a well defined homomorphism for  which the following diagram is commutative
 \begin{equation*}
\begin{CD}
1@>>>A_j@>>>A_j\rtimes H_1@>>>H_1@>>>1\\
@. @VV{\psi}V @VV{\Theta}V @|\\
1@>>>B_1@>>>B_1\rtimes H_1@>>>H_1@>>>1
\end{CD}
\end{equation*}
 and hence $A_j \sim_{H_1}B_1$ 
 (note that the action of $H_1$ on $A_j$ is the trivial one and it is not equivalent to the action of $H$ on $A$).

Now, by definition,
 $$I_{H_1}(A_j) =\{x \in H_1 \mid x \textrm{ induces an inner automorphism on } A_j\}  =H_1,$$
 hence $A_j \sim_{H_1}B_1$ implies
 $I_{H_1}(B_1)=I_{H_1}(A_j) =H_1$. 
Then $I_W(B^n)=(I_H(B))^n=H^n$, and since $B^n \sim_W A^n$, we get 
$I_W(A^n)=I_w(B^n)=H^n$. Therefore $I_H(A)=H=I_H(B)$. As we will see in the subsequent Lemma \ref{banale}, 
 from  the facts that  $I_H(A)=H=I_H(B)$ and that $A \cong B$ as groups,  we get that $A$ and $B$ are $H$-equivalent to the same trivial $H$-group. 
  By transitivity, it follows that  $A \sim_H B$ and this  gives the desired contradiction.

\item Let $A$ be a chief factor of $H$. Then $L \cong H/C_H(A)$ if $A$ is non-abelian, $L \cong A \rtimes H/C_H(A)$ otherwise. Note that $C_W(A^n) \leq \cap_{i=1}^n C_W(A_i) \leq H^n$, as the action of $K$ on the components is faithful. 
 Hence $C_W(A^n)=C_H(A)^n.$ Then $W/C_W(A^n) \cong (H/C_H(A)) \wr K$ and the result follows.  
\end{enumerate}

\end{proof}

\begin{lemma}\label{banale}
Let $A$ be a $G$-group with trivial center. If $I_G(A)=G$ then $A$ is $G$-equivalent to the trivial $G$-group $A^*$, where $A^* =A$ as a group.   
\end{lemma}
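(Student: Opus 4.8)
The plan is to untwist the semidirect product: the hypothesis $I_G(A)=G$ says that $G$ acts on $A$ only through inner automorphisms, and I want to leverage this, together with $Z(A)=1$, to exhibit an isomorphism $A\rtimes G\cong A^{*}\rtimes G=A^{*}\times G$ that is the identity on $A$ and compatible with the projections onto $G$; this is exactly a commuting diagram witnessing $A\sim_G A^{*}$. The conceptual heart of the matter is that $1\to A\to A\rtimes G\to G\to 1$, although possibly a non-trivial extension as an abstract group, becomes a \emph{trivial} extension once the action is forgotten, because the ``twisting'' is absorbed into $A$ itself.

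Concretely, I would proceed as follows. First, set $C=C_{A\rtimes G}(A)$, the centralizer in $A\rtimes G$ of the normal subgroup $A$; as the centralizer of a normal subgroup, $C\trianglelefteq A\rtimes G$, and since $Z(A)=1$ we have $A\cap C=Z(A)=1$. Next I would show that the quotient map $q\colon A\rtimes G\to G$ restricts to an isomorphism $C\to G$: it is injective on $C$ because $\ker q=A$ and $A\cap C=1$; and it is surjective because $I_G(A)=G$ means every $g\in G$ acts on $A$ as conjugation by some $a\in A$, so if $\tilde g$ is a preimage of $g$ under $q$ then $a^{-1}\tilde g$ centralizes $A$, i.e. $a^{-1}\tilde g\in C$, and $q(a^{-1}\tilde g)=g$. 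Now $A$ and $C$ are normal subgroups with $A\cap C=1$ and $AC=A\rtimes G$ (any $x$ has $q(x)=q(c)$ for some $c\in C$, whence $xc^{-1}\in\ker q=A$), so $A\rtimes G=A\times C$ as an internal direct product. Finally, letting $\theta\colon C\to G$ be the restriction of $q$ and identifying $A^{*}\rtimes G$ with $A^{*}\times G$ (trivial action), the isomorphism $\Phi=\mathrm{id}_A\times\theta\colon A\times C\to A^{*}\times G$ restricts to the identity $A\to A^{*}$ and satisfies $q=(\text{projection onto }G)\circ\Phi$, since on the $A$-factor both sides are trivial and on the $C$-factor $\Phi$ agrees with $q$ by construction. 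Hence the diagram defining $G$-equivalence commutes with $\phi=\mathrm{id}_A$, and $A\sim_G A^{*}$.

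I do not expect a real obstacle here: the only place demanding any care is the surjectivity of $q|_C$, which is just a repackaging of the hypothesis $I_G(A)=G$, using $Z(A)=1$ to know the inner automorphism is implemented by a (unique) element of $A$; everything else is the standard fact that a normal complement splits an extension as a direct product. One should also make sure the left/right-action conventions in the definition of $A\rtimes G$ are handled consistently, but the centralizer description of $C$ is convention-robust, which is why I would phrase the argument through $C_{A\rtimes G}(A)$ rather than by writing down an explicit cocycle-style formula for $\Phi$.
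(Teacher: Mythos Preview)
Your proof is correct and is essentially the same argument as the paper's, just packaged more structurally: the paper writes down the explicit section $g\mapsto f(g)^{-1}g$ of $q$ (where $a^{f(g)}=a^g$) and checks by hand that $\Phi(a,g)=af(g)^{-1}g$ is a homomorphism, whereas you observe directly that the image of this section is $C=C_{A\rtimes G}(A)$ and get the homomorphism property for free from the internal direct product $A\rtimes G=A\times C$. Your $\Phi$ is simply the inverse of the paper's.
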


\begin{proof}
This is a consequence of the definition (see remark after Proposition 1.2 in \cite{paz}) and Theorem 11.4.10 in \cite{robinson}, but for  the readers' convenience, we will sketch a direct proof. 

Since $A$ has trivial center and  $I_G(A)=G$, there is a homomorphism $f: G \mapsto A$ which send $g \in G$ to the element $f(g)$ in $A$ 
 such that  $a^{f(g)}=a^g$ for every $a \in A$. 
Let $A^*$ be the trivial $G$-group  equal to $A$ as a group. 
Now we define 
\begin{eqnarray*}
\Phi : &A^* \times G &\mapsto A \rtimes G\\
&(a,g) &\mapsto a f(g)^{-1}g.    
\end{eqnarray*}
Note that, by definition of $f$,  for every 
 $g \in G$ 
 the element $f(g)^{-1}g$ centralizes the elements of $A$ in  $A \rtimes G.
$ 
Thus $((a_1,g_1)(a_2,g_2))^\Phi= (a_1 a_2, g_1g_2)^\Phi=
a_1 a_2 f(g_1g_2)^{-1} g_1 g_2 
=a_1 (a_2 f(g_2)^{-1}) ( f(g_1)^{-1} g_1) g_2=
 a_1  (f(g_1)^{-1} g_1)  (a_2 f(g_2)^{-1})g_2=(a_1,g_1)^\Phi(a_2,g_2) ^\Phi$, since $a_2 f(g_2)^{-1} \in A$. This shows that $\Phi$ is a homomorphism.  
 Then the following diagram is commutative: 
\begin{equation*}
\begin{CD}
1@>>>A^*@>>>A^* \times G@>>>G@>>>1\\
@. @| @VV{\Phi}V @|\\
1@>>>A@>>>A \rtimes G@>>>G@>>>1
\end{CD}
\end{equation*}
and we conclude  that $A \sim_G A^*$. 
 \end{proof}

>From now on, $B$ will denote the base subgroup $H^n$ of $W=H \wr K =B \rtimes K$. 
Let us fix a chief series of $H$ passing through the derived subgroup $H'$ of $H$

\begin{equation}\label{chief-series of H} 
	1=N_t \lhd N_{t-1} \lhd  \cdots \lhd N_{t'}=H' \lhd \cdots \lhd N_1 \lhd N_0 =H.
\end{equation}

Since every  $N_i^n $ is normal in $W$, we can refine the series $(N_i^n )_i$ to get a  $W$-chief series of $B$  
 passing through the derived subgroup $B'$ 
\begin{equation}\label{chief-series of B} 
	1=M_{s_t} \lhd  \cdots  \lhd  M_{s_{t-1}} = N^n_{t-1} \lhd  \cdots \lhd
	 M_{s_{t'}}=N^n_{t'}=B' \lhd \cdots 
	  M_1 \lhd M_0 =B.
\end{equation}

For every prime $p$,  let $d_p({H/H'})$ be the minimal number of generators of the Sylow $p$-subgroup of ${H/H'}$. 
 Note that $d_p({H/H'})=h_{H/H'}(A)$ where $A$ is a   central non-Frattini  chief-factor of $H/H'$ of order $p$.  
Moreover,  if  $A=X/Y $ is a central non-Frattini (i.e. complemented) chief-factor of $H$, then $X$ can not be contained in $H'$; therefore  
\begin{equation}\label{central}
d_p({H/H'})=h_H(\F_p)=h_{H/H'}(\F_p)
\end{equation}
 where $A \sim_H \F_p$ and   $\F_p$ is the irreducible  trivial $\F_p H$-module. 

\begin{prop}\label{chief} 
Let $M=M_i/M_{i+1}$ be a non-Frattini chief factor   of the series \ref{chief-series of B}.
\begin{enumerate}
	\item If $M_i \leq B'$, then there exists a non-Frattini chief factor $A=X/Y$ of the series \ref{chief-series of H} contained in $H'$ such that $M =X ^n/Y^n$. Moreover $M$ is not  $W$-equivalent to any chief factor of $W/B'$, 
  $\delta_W(M)=\delta_H(A)$ and $L_M \cong L_A \wr K$.  
\item If  $B' \leq  M_{i+1} < M_i \leq B$,  then $\delta_W(M) \leq \delta_K(M) + d_p( H/H') r_K(M)$.  
	\item If $B \leq M_{i+1},$ and $M$ is not equivalent to any 
 $W$-chief factor of $B/B'$, 
 then the action of $W$ on $M$ induces an action of $K$ on $M$, $\delta_W(M)=\delta_K(M)$ and the primitive monolithic group associated to $M$ is the same in the two actions.  
\end{enumerate}
\end{prop}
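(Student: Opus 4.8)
The three cases are governed by the location of $M_i$ relative to the derived subgroup $B'=(H')^n$, so the plan is to handle each separately, reusing Proposition 3.1 as the main engine.

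\emph{Case (1): $M_i\le B'$.} Here the refinement of $(N_i^n)_i$ inside $B'$ sits entirely among the terms $N_{t'}^n,\dots,N_t^n$, and a $W$-chief factor $M$ sitting between consecutive refinement terms must, by the minimality of a chief factor, coincide with one of the homogeneous blocks $A^n$ for $A=X/Y$ a chief factor of $H$ with $X\le H'$ (a block of the form $X^n/Y^n$ cannot be $W$-properly refined, because any proper $W$-submodule of $A^n$ would, after intersecting with a single component, contradict irreducibility of $A$ as an $H$-group — this is exactly Proposition 3.1(1)). Once $M=A^n$, Proposition 3.1(4) gives that $A^n$ is a chief factor of $W$ with associated monolithic group $L_A\wr K$ (note $A$ is non-central here, being inside $H'$ and non-Frattini, so part (4) applies). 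That $M$ is not $W$-equivalent to any chief factor of $W/B'$ follows from Proposition 3.1(3): a chief factor of $W$ above $B'$ is, up to $W$-equivalence, either a trivial $W$-module of prime order or a block $C^n$ with $C$ a non-abelian chief factor of $H/H'$, and in neither case is it $W$-equivalent to $A^n$ with $A\le H'$ (the abelian-vs-nonabelian and the $\nsim_H$ obstructions from 3.1(2)--(3) cover this). Finally $\delta_W(M)=\delta_H(A)$: each non-Frattini chief factor of $H$ that is $H$-equivalent to $A$ contributes exactly one $W$-equivalent copy $A^n$ to the chief series of $B'$ via the refinement (injectivity by 3.1(3), surjectivity by the structure of the refinement), and one checks these blocks remain non-Frattini in $W$ (a complement to $A$ in $H$ yields a complement to $A^n$ in $W$).

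\emph{Case (3): $B\le M_{i+1}$.} Then $M$ is a chief factor of $W/B\cong K$, so the $W$-action factors through $K$ and $M$ is a $K$-chief factor. Since $M$ is assumed not $W$-equivalent to any chief factor of $B/B'$, no crown-merging across the quotient $W\to K$ can occur between $M$ and the abelian part of $B$; hence every maximal subgroup of $W$ witnessing a $W$-equivalent non-Frattini copy of $M$ contains $B$ and corresponds to such a subgroup of $K$. Therefore $\delta_W(M)=\delta_K(M)$, and $L_M$ computed in $W$ equals $L_M$ computed in $K$ because $C_W(M)\supseteq B$ forces $W/C_W(M)\cong K/C_K(M)$, with the abelian case handled identically.

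\emph{Case (2): $B'\le M_{i+1}<M_i\le B$.} This is the hard part. Here $M$ is a chief factor of $B/B'=(H/H')^n$ as a $W$-module, i.e. an irreducible $\F_p K$-module appearing in the induced/tensor structure of $(H/H')^n$; equivalently $M$ is a chief factor of $W/B'$, which is the abelian-by-$K$ group $(H/H')^n\rtimes K$. The bound $\delta_W(M)\le \delta_K(M)+d_p(H/H')\,r_K(M)$ should come from splitting $\delta_W(M)$ into the contribution of chief factors of $W/B$ (which is $\le\delta_K(M)$, as in Case (3)) plus the contribution from chief factors strictly inside $B/B'$. For the latter, the number of non-Frattini $W$-equivalent copies of $M$ inside $(H/H')^n$ is controlled by a first-cohomology / complement count; using that $(H/H')^n$ as a $\Z K$-module is a direct sum governed by the $d_p(H/H')$ generators of the Sylow $p$-part of $H/H'$ — cf. equation (3.9) and the identity $d_p(H/H')=h_{H/H'}(\F_p)$ — one gets at most $d_p(H/H')$ "copies" of the regular-type module, each contributing at most $r_K(M)=\dim_{\End_K M}M$ to the multiplicity of $M$. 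I would make this precise by computing $H^1(W,M)$ via the inflation–restriction sequence for $B'\trianglelefteq W$ (with $M$ a $B'$-trivial module), relating $\delta_W(M)$ to $\delta_{W/B'}(M)$, and then bounding $\delta_{(H/H')^n\rtimes K}(M)$ by a Shapiro-type / Nakayama argument on the induced module $\F_p[H/H']^n$. The main obstacle is exactly this cohomological bookkeeping: ensuring the crossed-homomorphism count over the wreath base does not overshoot $d_p(H/H')\,r_K(M)$, and correctly separating the "diagonal" part (which feeds into $\delta_K(M)$) from the "augmentation" part (which feeds into the $d_p(H/H')r_K(M)$ term). I expect the cleanest route is to invoke Proposition 2.7 (the $d(I_{H\wr G})$ formula) in cohomological disguise, since that proposition already encodes precisely this kind of split for the augmentation ideal.
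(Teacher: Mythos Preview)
Your outline for Cases (2) and (3) is essentially on target --- in particular, the decomposition of the Sylow $p$-part of $(H/H')^n$ as a sum of $d_p(H/H')$ cyclic $\F_pK$-modules is exactly the mechanism the paper uses (citing \cite{andrea1,andrea2}), and the cohomological detour you propose afterwards is unnecessary: once you have that decomposition, the multiplicity of $M$ inside $B/B'$ is bounded by $d_p(H/H')\,r_K(M)$ by a direct module count, and adding $\delta_K(M)$ for the part above $B$ finishes it. Case (3) is handled in the paper in one line, using part (1) to rule out equivalence with anything inside $B'$, together with the hypothesis ruling out $B/B'$; your version says the same thing more verbosely.

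The real problem is in Case (1), specifically your argument that $M=A^n$ is not $W$-equivalent to any chief factor of $W/B'$. Your description of those chief factors is wrong: you claim they are ``either a trivial $W$-module of prime order or a block $C^n$ with $C$ a non-abelian chief factor of $H/H'$'', but $H/H'$ is abelian, so the second option is empty, and the first is false --- a chief factor of $W$ inside $B/B'$ is an irreducible $\F_pK$-module, which is typically a non-trivial $W$-module, and chief factors of $W/B\cong K$ can be anything $K$ produces. So Proposition~3.1(3) does not apply, because the chief factor above $B'$ need not be of the form $C^n$ for any $H$-group $C$ at all. The paper closes this gap with a different invariant: it uses $I_W(-)$. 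One has $I_W(A^n)=(I_H(A))^n\le B$, while for a chief factor $X/Y$ with $B'\le Y$ one gets $I_W(X/Y)\ge B$ (since $B$ centralises $B/B'$). Equality forces $I_W(A^n)=B$, whence both factors are abelian, $W$-equivalence becomes $W$-isomorphism, and $A^n$ would be $B$-central, contradicting that a non-Frattini chief factor of $H$ lying in $H'$ is non-central. You should replace your classification attempt with this $I_W$ argument; without it the equality $\delta_W(M)=\delta_H(A)$ is not established, because you have not excluded extra $W$-equivalent copies of $M$ appearing above $B'$.
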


\begin{proof} 
 \begin{enumerate}
	\item We first prove that   the map $A=X/Y \mapsto A^n=X^n/Y^n$ gives a bijection between the set of non-Frattini chief factors of the series \ref{chief-series of H} contained in $H'$   and the set of 
	non-Frattini chief factors of the series \ref{chief-series of B} contained in $B'$.

 Let $A=X/Y $ be a  non-Frattini chief factor of the series \ref{chief-series of H} contained in $H'$. 
Note that  the central complemented chief factors of  \ref{chief-series of H}  lie above $H'$.
 Then $A$ is not central and  hence,  
by  Proposition \ref{H-groups} , we have that $A^n$ is a   non-central chief factor of the series \ref{chief-series of B}     contained in $B'$. Moreover, if $U$ is a complement to $A$ in $H$, then $U \wr K$ is a complement to $A^n$ in $W$.  This implies that the map is well defined. 

To prove that the map is bijective, it is sufficient to  show that if $A=N_i/N_{i+1}$ is a Frattini chief factor of $H$,  then every chief factor $X/Y$ of the series \ref{chief-series of B} with 
 $N_{i+1}^n \leq Y < X \leq N_i^n$ is Frattini. We can assume $N_{i+1}=1$; thus $A \leq \frat H$   and $A^n \leq (\frat H )^n= \frat B \leq \frat W$ and we are done.

To prove that $\delta_H(A)=\delta_W(A^n)$ it is sufficient to show  that $A^n$ can not be equivalent to any $W$-chief factor containing $B'$, indeed, from  Proposition \ref{H-groups}, we already know there are $\delta_H(A)$ chief factors of \ref{chief-series of B} $W$-equivalent to $A^n$ inside $B'$. 
 Assume, by contradiction, that $A^n \sim_W M =X/Y$ where $B' \leq Y \leq X \leq W$. Then $I_W(A^n)=I_W(M)$. But, on one hand, $I_W(A^n) = (I_H(A))^n \leq B$, on the other hand $I_W(M)=X C_W(X)$. This implies that $X \leq B$ and $I_W(M)=B$. In particular, as $B' \leq Y$,  $M$ is  centralized  by $B$. Therefore the two factors $A^n$ and $M$ are abelian, the equivalence relation reduces to a $W$-isomorphism  and hence $A^n$ is centralized by $B$. 
It follows that $A$ is a central factor of $H$, but this is a contradiction, since complemented central  chief factors of  \ref{chief-series of H}  lie above $H'$.   

Finally, by Proposition \ref{H-groups} we get that $L_M = L_A \wr K$.  

	\item Set ${\ol H}=H/H'$ and note that $B \leq C_W(M)$, hence the action of $W$ on $M$ induces an action of $K$ on $M$. We follow the arguments of Lemma 2.1 \cite{andrea1} and Lemma 4.1 in \cite{andrea2}.  Since we are dealing with non-Frattini factors,  we can assume that the Frattini subgroup of ${\ol H}$ is trivial.  The Sylow $p$-subgroup  ${\ol H}_p$ of ${\ol H}$ is a vector space of dimension
 $d=d_p({\ol H})$ generated, let say, by the elements $h_1, \ldots , h_d$. Then the  Sylow $p$-subgroup ${\ol H}_p^n$  of ${\ol H}^n$ is generated, as an $\F_pK$-module, by the elements $(h_i,\ 1, \ldots ,\ 1)$. In particular  ${\ol H}^n$  is the direct sum of $d$ cyclic  $\F_pK$-modules, and the number of complemented  $\F_pK$-modules $K$-equivalent to $M$ in ${\ol H}^n$ is at most  $d_p({\ol H}) r_K(M)$ where $ r_K(M)=\dim_{\End_K(M)}(M)$ (see Lemma 2.1 \cite{andrea1}).
 It follows that $\delta_W(M) \leq \delta_K(M) + d_p(\ol H) r_K(M)$.

\item It is sufficient to note that $B \leq C_W(M)$ and that, by the first part of the proposition, $M$ can not be equivalent to any chief factor contained in $B'$.   
\end{enumerate}
\end{proof}

Now  we consider non-trivial $W$-modules (abelian $W$-groups) and the values of the function $h_W$ on them. 

\begin{prop}\label{moduli} 
Let $p$ be a prime and  $M$ be a non-trivial irreducible   $\F_p W$-module. 

\begin{enumerate}
	\item If $M$ is $W$-equivalent to a non-Frattini $W$-chief factor contained in $B'$, then
 there exists a non-trivial irreducible   $\F_p H$-module $U$ such that $M \sim_W U^n$ and 
 $h_W(M) \leq \left\lceil \frac{h_H(U)-2}{n}\right\rceil+2$. 
	\item If $M$ is $W$-equivalent to a non-Frattini $W$-chief factor of $B/B'$, then
 $h_W(M)\leq h_K(M)+d_p(H/H')$. 
	\item If $M$ is not $W$-equivalent to any non-Frattini $W$-chief factor of $B$ but  $\delta_W(M)=\delta_K(M)\geq 1$, then  
	$h_W(M)=h_K(M)$. 
	\item If $\delta_W(M)=0$, then $h_W(M) \leq 2$. 
\end{enumerate}
\end{prop}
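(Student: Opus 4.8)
The strategy in all four cases is to reduce the computation of $h_W(M)$ to cohomological data over $H$ or over $K$, combining the definition of $h_G$ with the identity
\[ s_G(M)=\delta_G(M)+\dim_{\End_G(M)}H^1\bigl(G/C_G(M),M\bigr), \]
valid for every non-trivial irreducible $G$-module $M$ (cf. [1.2] in \cite{andrea2}), and feeding in the information on $\delta_W(M)$ supplied by Proposition \ref{chief}. I expect case (1) to carry essentially all the difficulty.

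For (1): by Proposition \ref{chief}(1) a non-Frattini $W$-chief factor contained in $B'$ is of the form $A^n$, with $A=X/Y$ a non-Frattini — hence non-central, since complemented central factors of $H$ lie above $H'$ — chief factor of $H$ inside $H'$; thus $U:=A$ is a non-trivial irreducible $\F_p H$-module, and, $W$-equivalence of abelian groups being $W$-isomorphism, $M\cong U^n$ as $\F_p W$-modules. The crux is the pair of isomorphisms $H^1(W,M)\cong H^1(H,U)$ and $\End_W(M)\cong\End_H(U)$, which give $s_W(M)=s_H(U)$ but $r_W(M)=\dim_{\End_H(U)}U^n=n\,r_H(U)$, whence
\[ h_W(M)=\Bigl\lfloor\frac{s_H(U)-1}{n\,r_H(U)}\Bigr\rfloor+2=\Bigl\lfloor\frac{h_H(U)-2}{n}\Bigr\rfloor+2\le\Bigl\lceil\frac{h_H(U)-2}{n}\Bigr\rceil+2, \]
using $\lfloor x/n\rfloor=\lfloor\lfloor x\rfloor/n\rfloor$. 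To prove the first isomorphism I would write $U^n=\operatorname{Ind}_{B\rtimes P}^{W}\widetilde U$, where $P=\Stab_K(1)$ and $\widetilde U=U_1$ is the first copy of $U$ in $U^n$, which as a $(B\rtimes P)$-module is the inflation of $U$ along $B\rtimes P=H_1\times\bigl((H_2\times\cdots\times H_n)\rtimes P\bigr)\twoheadrightarrow H_1\cong H$ (the point stabiliser $P$ centralises $H_1$ and fixes $U_1$ pointwise); Shapiro's lemma then gives $H^1(W,M)\cong H^1(B\rtimes P,\widetilde U)$, and the K\"unneth formula over $\F_p$ together with $U^{H}=0$ collapses the right-hand side to $H^1(H,U)$. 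For the second isomorphism, the summands $U_1,\dots,U_n$ are pairwise non-isomorphic irreducible $B$-modules, so $\End_B(U^n)\cong\End_H(U)^n$, and $\End_W(M)=\End_B(U^n)^{K}$ is the diagonal subfield $\cong\End_H(U)$, because $K$ permutes the $n$ factors transitively and without Galois twisting.

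For (2) and (3): here $M$ is $W$-equivalent to a $W$-chief factor $X'/Y'$ with either $B'\le Y'$ and $X'\le B$ (case (2)) or $B\le Y'$ (case (3)); in both situations $[B,X']\le Y'$, so $B\le C_W(M)$ and the $W$-action on $M$ factors through $K$. Hence $\End_W(M)=\End_K(M)$, $W/C_W(M)\cong K/C_K(M)$, and the monolithic group $L_M$ and the integer $c:=\dim_{\End_K(M)}H^1(L_M/M,M)$ are the same whether $M$ is regarded over $W$ or over $K$. Writing $r=r_K(M)$, the displayed identity gives $h_W(M)=\lfloor(\delta_W(M)-1+c)/r\rfloor+2$ and $h_K(M)\ge\lfloor(\delta_K(M)-1+c)/r\rfloor+2$ (with equality when $\delta_K(M)\ge1$; when $\delta_K(M)=0$ one uses the injectivity of inflation on $H^1$). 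Case (3) is then immediate from the hypothesis $\delta_W(M)=\delta_K(M)$, and case (2) follows from the bound $\delta_W(M)\le\delta_K(M)+d_p(H/H')\,r$ of Proposition \ref{chief}(2) together with the integrality of $d_p(H/H')$.

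For (4): if $\delta_W(M)=0$ the displayed identity becomes $s_W(M)=\dim_{\End_W(M)}H^1(W/C_W(M),M)$, and since $M$ is a faithful irreducible $W/C_W(M)$-module, the Aschbacher--Guralnick bound $|H^1(G,V)|<|V|$ for a faithful irreducible $V$ yields $s_W(M)<r_W(M)$, so $h_W(M)=\lfloor(s_W(M)-1)/r_W(M)\rfloor+2\le2$. The main obstacle, as anticipated, is case (1): the factor $n$ — and thereby the division by $n_1\cdots n_{i-1}$ in Theorem \ref{main} — arises exactly from the contrast between $H^1$, which is unchanged on passage to the wreath product (Shapiro), and the $\End$-dimension of the module, which is multiplied by $n$; making this precise demands the direct-product decomposition of $B\rtimes P$ and some care with Shapiro's lemma, the K\"unneth formula, and the absence of field twisting when $K$ permutes the endomorphism rings of the components. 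Cases (2)--(4) are comparatively routine once Proposition \ref{chief} and the cohomological identity are in place.
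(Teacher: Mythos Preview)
Your argument is correct and follows the same line as the paper's: Proposition \ref{chief} supplies the $\delta$-bounds, the identity $s_G(M)=\delta_G(M)+\dim_{\End_G(M)}H^1(G/C_G(M),M)$ does the bookkeeping, and cases (2)--(4) then reduce to arithmetic with $h$. The only substantive difference is in presentation: for (1) the paper simply invokes \cite[step 2.5]{andrea2} and for (4) it cites \cite[Lemma 1.5]{andrea-h}, whereas you unpack both---your Shapiro/K\"unneth computation of $H^1(W,U^n)\cong H^1(H,U)$ and $\End_W(U^n)\cong\End_H(U)$ is exactly the content of the cited step (and in fact yields the sharper equality $h_W(M)=\lfloor(h_H(U)-2)/n\rfloor+2$), and your Aschbacher--Guralnick bound is what underlies the cited lemma.
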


\begin{proof}
\begin{enumerate}
	\item The first part follows from 
 Propositions \ref{chief}, 
 the bound of $h_W(M)$ is proved in 
    \cite[step 2.5]{andrea2}. 
	\item 
Since  $M$ is $W$-equivalent to a chief factor of $B/B'$, 
 $B$ centralizes $M$ and hence $r_W(M)=r_K(M)$.
 Let  ${\ol H}=H/H'$.  
By Proposition \ref{chief},   $\delta_W(M) \leq \delta_K(M) + d_p(\ol H) r_K(M)$. 
 Moreover (see (1.2) in \cite{andrea2}) 
\begin{eqnarray*}
	s_W(M)&=&\delta_W(M) +\dim_{\End_W(M)} H^1(W/C_W(M),M) \\
	&\leq&  \delta_K(M) + d_p(\ol H) r_K(M) +\dim_{\End_K(M)} H^1(K/C_K(M),M)\\
	&=& d_p(\ol H) r_K(M)+s_K(M).
\end{eqnarray*}
Therefore, 
\begin{eqnarray*}
h_W(M)&=& \left[\frac{s_W(M)-1}{r_W(M)}\right]+2 \\
& \leq &
  \left[\frac{ d_p(\ol H) r_K(M)+s_K(M)-1}{r_K(M)}\right]+2 \\
 & \leq & h_K(M)+d_p(\ol H).
\end{eqnarray*}
	\item Since  $\delta_W(M)=\delta_K(M)\geq 1$, we have that $M$ is not equivalent to any chief factor contained in $B$  and hence 
 $B$ is contained in $R_W(A)$ where  $A$ is  a chief factor  $W$-equivalent to  $M$ 
 (every minimal normal subgroup of $W/R_W(A)$ is $W$-equivalent to $A$). 
 By the same arguments used to prove  equation \ref{R}, it follows that  $h_W(M)=h_{W/B}(M)=h_K(M)$.
 \item This is proved in  Lemma 1.5 of \cite{andrea-h}. 
\end{enumerate}
\end{proof}

\section{Number of generators of wreath products} 

Let $L$ be a monolithic primitive group with socle $N$. 
Let us denote by $P_L(d)$
 (resp. $P_{L/N}(d))$  the probability of generating
 $L$ (resp. $L/N$) with $d$ elements, and, for $d \geq d(L)$,  let 
 \[P_{L,N}(d)=P_L(d)/P_{L/N}(d).\]

When $N$ is non-abelian, the formula given in \cite{dv-l} to evaluate $d(L_t)$ is the following: 

\begin{thm}\cite[Theorem 2.7]{dv-l}\label{f}
Let $L$ be a monolithic primitive group with  non-abelian socle $N$ and let $d \geq d(L)$. 
Then $d(L_t) \leq d $ if and only if 
\[t \leq \frac{P_{L,N}(d) |N|^d}{|C_{\aut L}(L/N)|}.\]
\end{thm}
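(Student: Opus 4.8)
The plan is to count $d$-tuples in the crown-based power $L_t$ that generate it, following the standard strategy for computing $d(L_t)$ via generating probabilities. Recall $L_t = N^t \rtimes \diag L^t$ inside $L^t$, and that the $t$ minimal normal subgroups of $L_t$ are the copies $N_1,\dots,N_t$ of $N$, all $L_t$-equivalent; moreover $L_t/N^t \cong L/N$. A $d$-tuple $(x_1,\dots,x_d) \in L_t^d$ generates $L_t$ if and only if its image in $(L/N)^d$ generates $L/N$ \emph{and} the $d$-tuple supplements each of the $t$ factors $N_i$; the point is that once the top quotient $L/N$ is generated, the obstruction to generating all of $L_t$ is purely a question about the semisimple socle $N^t$, which one analyses factor by factor using the fact that the $N_i$ are non-abelian (so have trivial $L$-endomorphism ring issues and no $H^1$ contribution in the usual sense) and pairwise non-$L_t$-isomorphic as $L_t$-groups once we fix coordinates.

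First I would fix a generating pair structure: lift a generating $d$-tuple $\ol y = (\ol y_1,\dots,\ol y_d)$ of $L/N$ to $L^d$, and for each such lift count the number of ways to adjust by elements of $N^d$ (inside $L_t$, i.e. by elements of $N^t$ in each block, compatibly) so that the result generates $L_t$. The key combinatorial input is the formula, essentially due to Gasch\"utz and made explicit in this setting, that the number of $d$-tuples of $L^d$ lying above a fixed generating $d$-tuple of $(L/N)$ and generating $L$ is $P_{L,N}(d)\,|N|^d$ divided by nothing (this is the definition of $P_{L,N}$), while the number lying above it and generating a subgroup that \emph{supplements} $N$ but maps onto a proper "diagonal-type" subgroup must be controlled by $|C_{\aut L}(L/N)|$: two lifts $\ol x$, $\ol x'$ over the same $\ol y$ generate subgroups projecting isomorphically onto $L$ that get identified in $L_t$ precisely when they differ by an automorphism of $L$ centralizing $L/N$. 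So the count of "good" configurations in the socle is $(P_{L,N}(d)|N|^d/|C_{\aut L}(L/N)|)$ choices per coordinate block, and requiring all $t$ blocks to be simultaneously supplemented while the subgroup projects diagonally onto $L$ gives that $L_t$ is $d$-generated iff $t$ does not exceed this quantity.

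Concretely, the steps I would carry out are: (i) reduce to the statement that $(x_1,\dots,x_d)$ generates $L_t$ iff it generates $L_t/N^t$ and, writing $X = \langle x_1,\dots,x_d\rangle$, the projection $X \to L$ to each of the $t$ coordinates is onto and "the $t$ projections $X \to L$ are pairwise inequivalent modulo $C_{\aut L}(L/N)$"; (ii) using the equivalence of the $N_i$, show via the standard Aschbacher--Guralnick / crown argument that a subgroup $X \le L_t$ surjecting onto $L_t/N^t$ and onto each coordinate $L$ is either all of $L_t$ or avoids at least one $N_i$; (iii) count: the number of $d$-tuples over a fixed generator of $L/N$ that give a fixed-coordinate surjection onto $L$ is $P_{L,N}(d)|N|^d / P_{L/N}(d)\cdot P_{L/N}(d)$... — more carefully, for each of the $d$ coordinates the freedom in $N$ modulo the identification by $C_{\aut L}(L/N)$ yields exactly $P_{L,N}(d)|N|^d/|C_{\aut L}(L/N)|$ admissible "slots", and $L_t$ is generated iff $t$ is at most the number of such slots, which is the asserted bound; (iv) conclude with an inclusion--exclusion / Hall-type argument over the $t$ coordinates to turn "each block supplementable with distinct labels" into the clean inequality $t \le P_{L,N}(d)|N|^d/|C_{\aut L}(L/N)|$.

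I expect the main obstacle to be step (ii)–(iii): correctly identifying the role of $|C_{\aut L}(L/N)|$, i.e. proving that the right count of "distinguishable" ways to supplement a single factor $N_i$ while keeping the projection onto $L$ surjective and onto $L/N$ equal to the prescribed tuple is exactly $P_{L,N}(d)|N|^d/|C_{\aut L}(L/N)|$ rather than $P_{L,N}(d)|N|^d$ or some other correction. This is where one must be careful about which automorphisms of $L$ fixing $L/N$ pointwise can merge two a priori different lifts inside the single group $L_t$, and it is the genuinely new ingredient compared to the abelian socle case (where $C_{\aut L}(L/N)$ plays no such role and one gets $h_L(N)$-type formulas instead). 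Everything else — the reduction in (i), the crown/equivalence bookkeeping, and the final counting inequality — is routine given the machinery recalled in Section~\ref{gen-crowns} and the results of \cite{dv-l}.
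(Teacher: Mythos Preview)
The paper does not prove this statement at all: it is quoted verbatim as \cite[Theorem~2.7]{dv-l} and used as a black box in the proof of Proposition~\ref{L_t}. So there is no ``paper's own proof'' to compare against; the only comparison possible is with the original argument of Dalla~Volta--Lucchini.

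Your sketch is indeed the outline of that original argument: fix a generating $d$-tuple $\ol y$ of $L/N$, observe that the generating $d$-tuples of $L$ lying over $\ol y$ number exactly $P_{L,N}(d)\,|N|^d$, that $C_{\aut L}(L/N)$ acts freely on this set, and that a lift to $L_t$ generates $L_t$ if and only if the $t$ coordinate projections lie in pairwise distinct $C_{\aut L}(L/N)$-orbits. This gives the inequality. The one place where your write-up wobbles is step~(ii): the dichotomy ``either $X=L_t$ or $X$ avoids some $N_i$'' is not literally how the argument runs---a diagonal $X$ can surject onto every coordinate $L$ while containing no $N_i$---but you already have the correct condition in step~(i) (pairwise inequivalence of the projections modulo $C_{\aut L}(L/N)$), and that is what actually gets used. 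The garbled probability expression in step~(iii) should simply read: the number of $C_{\aut L}(L/N)$-orbits on generating lifts over $\ol y$ is $P_{L,N}(d)|N|^d/|C_{\aut L}(L/N)|$, and $d(L_t)\le d$ holds iff one can pick $t$ distinct orbits.

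In short: there is nothing to compare with in the present paper, and your sketch is a serviceable (if rough) account of the proof in the cited source.
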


In  Theorem 1.1 in \cite{fiore}  it is proved that if $|N|$ is large enough  and $d\geq 2$ random elements generate $L$ modulo $N$,  
 then these elements almost certainly generate $L$ itself: 

\begin{thm} \cite[Theorem 1.1]{fiore}\label{fio}
There exists a positive integer $k_0$ such that, if $L$ is a  monolithic primitive group with socle $N$ and  $|N|\geq k_0$, then for every $d \geq d(L)$ we have $P_{L,N}(d) \geq 1/2$.  
\end{thm}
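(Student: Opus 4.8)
The plan is to read $P_{L,N}(d)$ as a conditional probability and estimate it by a union bound. Indeed $P_{L,N}(d)$ is exactly the probability that $d$ uniformly random elements $g_1,\dots,g_d$ of $L$ generate $L$, conditioned on their images generating $L/N$. So I would fix an arbitrary $d$-tuple $(\bar g_1,\dots,\bar g_d)$ generating $L/N$, choose preimages $g_i\in L$ independently and uniformly (so that $(g_1,\dots,g_d)$ is uniform on the $|N|^d$ lifts), and observe that $\langle g_1,\dots,g_d\rangle$ supplements $N$ and hence equals $L$ unless all $g_i$ lie in a maximal subgroup $M^*$ with $M^*N=L$; any such $M^*$ has $N\not\le M^*$ and, since $N$ is the unique minimal normal subgroup, is core-free. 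For a fixed such $M^*$ the lifts of $(\bar g_i)$ that land in $M^*$ number exactly $|M^*\cap N|^d$, and $|M^*\cap N|/|N|=[L:M^*]^{-1}$, so a union bound gives
\[
P_{L,N}(d)\ \ge\ 1-\sum_{M^*:\,N\not\le M^*}[L:M^*]^{-d},
\]
uniformly in the chosen generating tuple of $L/N$, hence for $P_{L,N}(d)$ itself. Since every such index is at least $2$, the right-hand sum is non-increasing in $d$; and if $d(L)=1$ then $L$ is cyclic of prime-power order, $|N|=p$, and the bound already gives $P_{L,N}(d)\ge 1-p^{-d}\ge\tfrac12$. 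So I may assume $d(L)\ge 2$, and the whole theorem reduces to producing an absolute $k_0$ with
\[
\Sigma(L):=\sum_{M^*\ \text{maximal},\ N\not\le M^*}[L:M^*]^{-2}\ \le\ \tfrac12\qquad\text{whenever }|N|\ge k_0 .
\]

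For the abelian case, suppose $N\cong\F_p^r$. If $N\not\le M^*$ then $M^*\cap N$ is normalized by $M^*N=L$, hence is either $N$ or $1$; being proper it is $1$, so $M^*$ is a complement to $N$ and $[L:M^*]=|N|$. The complements are counted by the cocycle group: there are $|Z^1(L/N,N)|=|H^1(L/N,N)|\cdot|N|$ of them, using that $C_N(L/N)=0$ because $N$ is a faithful irreducible $\F_p(L/N)$-module. Thus $\Sigma(L)=|H^1(L/N,N)|\cdot|N|^{-1}$, and I would finish by invoking the known bounds on the first cohomology of faithful irreducible modules (it suffices that $|H^1(L/N,N)|$ grows strictly more slowly than $|N|$, e.g. $\dim_{\F_p}H^1(L/N,N)\le\tfrac12\dim_{\F_p}N$, by results of Guralnick and others); this makes $\Sigma(L)\le\tfrac12$ for all $|N|$ above some bound.

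For the non-abelian case, write $N=S^k$ with $S$ non-abelian simple. Each $M^*$ as above makes $L$ a primitive permutation group of degree $[L:M^*]$, so I would run through the O'Nan--Scott types and use CFSG-based facts about primitive groups. The inputs needed are: (i) the degree $[L:M^*]$ of any faithful primitive representation of $L$ tends to infinity with $|N|$ --- for $k=1$ it is at least the minimal faithful transitive degree $m(S)$ of $S$, and $m(S)\to\infty$ with $|S|$; for $k\ge 2$ the diagonal, product and twisted-wreath possibilities likewise force growth with $|N|$; and (ii) the weighted count $\sum_{M^*\ \text{maximal},\ N\not\le M^*}[L:M^*]^{-2}$ is small when $|N|$ is large --- this is essentially the statement that the probabilistic-zeta-type sum $\sum_{M\ \text{maximal}}[G:M]^{-2}$ tends to $0$ as an almost simple or diagonal-type group $G$ grows, which is known. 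Combining (i) and (ii) yields $\Sigma(L)\to 0$ as $|N|\to\infty$; choosing $k_0$ so that $\Sigma(L)\le\tfrac12$ once $|N|\ge k_0$ in both the abelian and the non-abelian case completes the proof.

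The hard part will be the non-abelian case, where the estimates must be uniform over \emph{all} monolithic primitive groups with a given socle type. In particular one has to handle simultaneously the regime with $k$ bounded and $|S|\to\infty$ (inputs: the growth of $m(S)$ and sub-polynomial-or-better control on the maximal subgroups of almost simple groups, both CFSG consequences) and the regime with $|S|$ bounded and $k\to\infty$ (where every core-free maximal subgroup already has index a growing function of $|N|$, but one must check there are not too many of comparably small index). Making the heuristic ``the number of socle-supplementing maximal subgroups, weighted by reciprocal index squared, tends to $0$'' precise is exactly where a detailed appeal to the classification of primitive permutation groups and to quantitative results on maximal subgroups of almost simple groups is unavoidable.
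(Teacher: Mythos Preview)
The paper does not prove this theorem at all: it is quoted verbatim from Lucchini--Morini \cite{fiore} and used as a black box (to define the constant $k_0$ and to feed into Proposition~\ref{L_t}). So there is no ``paper's own proof'' to compare your proposal against.

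That said, your outline is essentially the strategy of \cite{fiore}. The reduction to bounding
\[
\Sigma(L)=\sum_{\substack{M^*\ \text{maximal}\\ N\not\le M^*}}[L:M^*]^{-2}
\]
via the union bound over socle--supplementing maximal subgroups is exactly how the conditional probability $P_{L,N}(d)$ is controlled there, and your abelian case (counting complements by $|Z^1(L/N,N)|$ and invoking a cohomology bound of Aschbacher--Guralnick/Guralnick--Hoffman type) matches the standard argument. Your assessment that the non-abelian case is the hard part, requiring a uniform O'Nan--Scott analysis together with CFSG-based quantitative information on maximal subgroups of almost simple groups, is correct and is precisely where \cite{fiore} does the real work. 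One small remark: your computation ``the lifts of $(\bar g_i)$ that land in $M^*$ number exactly $|M^*\cap N|^d$'' uses that $M^*N=L$ to ensure each coset $g_iN$ meets $M^*$; this is fine but worth making explicit. Otherwise the sketch is a faithful roadmap of the cited proof, not an alternative to it.
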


\begin{prop}\label{L_t}
Let $L$ be a  monolithic primitive group with a non-abelian socle $N$,  $K$ a transitive group of degree $n$
and  $L^*=L \wr K$. 
Assume that $|N|^n \geq k_0$. For every positive integer $t$ and every integer  $d \geq d(L^*/\soc{L^*})-2 $, if $d(L_t)\leq d \cdot n$, then
 $d(L^*_t)\leq d+2$.  
\end{prop}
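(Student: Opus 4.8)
The plan is to identify $\soc L^*$, use Theorem \ref{f} to convert the statement into a single numerical inequality, and then prove that inequality by computing (a bound on) $C_{\aut L^*}(L^*/\soc L^*)$.

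First I would fix notation and dispose of the degenerate cases. Write $B=L^n$ for the base group, so $L^*=B\rtimes K$, and set $S=N^n=N_1\times\cdots\times N_n\le B$. Since $Z(L)\le C_L(N)=1$, the group $B$ has a rigid Remak decomposition into directly indecomposable factors $L_1,\dots,L_n$, and $\soc B=N^n$. One checks, as in the proof of Proposition \ref{H-groups}(4), that $N^n$ is the unique minimal normal subgroup of $L^*$: indeed $C_{L^*}(N^n)=C_L(N)^n=1$ forces any minimal normal subgroup either to lie in $N^n$ — whence, being normal in $B$ and $K$-invariant with $K$ transitive, to equal $N^n$ — or to centralise $B$, which is impossible. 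Thus $\soc L^*=S$, $|\soc L^*|=|N|^n\ge k_0$, the group $L^*$ is monolithic primitive with non-abelian socle, and $L^*/S\cong (L/N)\wr K$. Now if $d\le 0$ or $dn<d(L)$, then for every $t\ge 1$ we have $d(L_t)\ge d(L_1)=d(L)>dn$ (because $L_t$ maps onto $L$), so the hypothesis never holds and the statement is vacuous; hence we may assume $d\ge 1$ and $dn\ge d(L)$. We will also use that $d+2\ge d(L^*)$: by hypothesis $d+2\ge d(L^*/\soc L^*)$ and $d+2\ge 2$, and since $|\soc L^*|\ge k_0$ the socle $S$ is not a generating chief factor of $L^*$ (this is part of the content behind Theorem \ref{fio}; see \cite{fiore}), so $d(L^*)=d(L^*/\soc L^*)\le d+2$.

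Next I would apply Theorem \ref{f} twice. For $L$ with the integer $dn$ (which is $\ge d(L)$) the hypothesis $d(L_t)\le dn$ gives
\[
t\ \le\ \frac{P_{L,N}(dn)\,|N|^{dn}}{|C_{\aut L}(L/N)|}\ \le\ \frac{|N|^{dn}}{|C_{\aut L}(L/N)|},
\]
using $P_{L,N}(dn)\le 1$ (generating $L$ forces generating $L/N$). For $L^*$ with the integer $d+2$ (which is $\ge d(L^*)$), Theorem \ref{f} states that $d(L^*_t)\le d+2$ is equivalent to $t\le P_{L^*,S}(d+2)\,|S|^{d+2}/|C_{\aut L^*}(L^*/S)|$, and here $P_{L^*,S}(d+2)\ge 1/2$ by Theorem \ref{fio}, since $|S|\ge k_0$. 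As $|S|^{d+2}=|N|^{n(d+2)}$, comparing the two displays reduces the proposition to the inequality
\[
|C_{\aut L^*}(L^*/S)|\ \le\ \frac{1}{2}\,|N|^{2n}\,|C_{\aut L}(L/N)|.
\]

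The heart of the argument, and the step I expect to be the main obstacle, is the estimate on $C_{\aut L^*}(L^*/S)$. Let $\alpha\in C_{\aut L^*}(L^*/S)$. Since $S$ is characteristic and $B/S$ is fixed pointwise, $\alpha(B)=B$, so by rigidity of the Remak decomposition $\alpha$ permutes the factors, say $\alpha(L_i)=L_{\rho(i)}$, inducing an automorphism $\beta_i$ on the $i$-th factor; reducing modulo $S$ shows each $\beta_i$ induces the identity on $L_i/N_i\cong L/N$, i.e. $\beta_i\in C_{\aut L}(L/N)$. Writing $\alpha(k)=s_k k$ with $s_k\in S$ for $k\in K$, and comparing $\alpha(kxk^{-1})$ with $\alpha(k)\alpha(x)\alpha(k)^{-1}$ for $x\in B$, one finds that $\rho$ centralises $K$ in $\perm(n)$ and that $\beta_i=\iota_{t_{k,i}}\circ\beta_{k^{-1}i}$ for suitable $t_{k,i}\in N$, where $\iota$ denotes conjugation in $L$. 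Since $K$ is transitive, all the $\beta_i$ lie in a single coset of $\inn_N(L)\cong N$: fixing $\beta_1\in C_{\aut L}(L/N)$ there are elements $u_1=1,u_2,\dots,u_n\in N$ with $\beta_i=\iota_{u_i}\circ\beta_1$, whence $t_{k,i}=u_iu_{k^{-1}i}^{-1}$, so each $s_k$ — and therefore $\alpha$ — is completely determined by $\beta_1$, $(u_2,\dots,u_n)$ and $\rho$. As $C_{\perm(n)}(K)$ is semiregular ($K$ being transitive) its order is at most $n$, so
\[
|C_{\aut L^*}(L^*/S)|\ \le\ n\,|N|^{n-1}\,|C_{\aut L}(L/N)|.
\]
Finally, $|N|\ge 60$ gives $2n\le|N|^{n+1}$ for every $n\ge 1$, that is $2n\,|N|^{n-1}\le|N|^{2n}$, which is precisely the inequality required above, and the proof is complete.
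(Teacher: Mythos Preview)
Your argument is correct and follows the same skeleton as the paper's proof: apply Theorem~\ref{f} to $L$ to bound $t$, apply Theorem~\ref{fio} to get $P_{L^*,S}(d+2)\ge 1/2$, and reduce everything to a numerical inequality involving $|C_{\aut L^*}(L^*/S)|$. The difference lies only in how that centraliser is estimated. The paper first discards $|C_{\aut L}(L/N)|$ via the crude inequality $N\le C_{\aut L}(L/N)$, obtaining $t\le |N|^{nd-1}$, and then quotes from \cite{dv-l-2} the ready-made bound $|C_{\aut L^*}(L^*/S)|\le na|S|^{na+1}$ (with $N=S^a$). You instead keep $|C_{\aut L}(L/N)|$ in play and prove the sharper relative estimate $|C_{\aut L^*}(L^*/S)|\le n\,|N|^{n-1}\,|C_{\aut L}(L/N)|$ by a direct, self-contained analysis of automorphisms of the wreath product; the final check $2n\le |N|^{n+1}$ then replaces the paper's $|S|^{na+a-1}\ge 2na$. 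Your route avoids the external citation at the cost of a short structural computation; both reach the goal with the same margin coming from $|S|\ge 60$.

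One small correction: your justification that $d+2\ge d(L^*)$ is misattributed. The claim ``$|S|\ge k_0$ implies $S$ is not a generating chief factor of $L^*$'' is false in general (take $L^*$ a large simple group: then $L^*/S=1$ and $S$ \emph{is} generating). What you need is the result of \cite{meneg}, used by the paper, that for a monolithic group with non-abelian socle one has $d(L^*)=\max(d(L^*/S),2)$; combined with your reduction to $d\ge 1$ this gives $d(L^*)\le\max(d+2,2)=d+2$ directly. This does not affect the validity of the rest of your argument.
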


\begin{proof} 
Since $L_t$ can be generated by $nd$ elements, by Theorem \ref{f}
 we have that 
 $$t \leq  \frac{P_{L,N}(nd) |N|^{nd}}{|C_{\aut L}(L/N)|}.$$
 As  $N \leq C_{\aut L}(L/N)$ and $P_{L,N}(nd) \leq 1$, we deduce $t \leq |N|^{nd -1}$.

Now, again by Theorem \ref{f}, to prove that $d(L^*_t) \leq d+2$, it is sufficient to prove that 
$$t \leq    \frac{P_{L^*,M}(d+2) |M|^{d+2}}{|C^*|}$$ where $M=\soc L^*$ and  $C^*=C_{\aut L^*}(L^*/M)$. By assumption $d+2 \geq  \max (d(L^*/M), 2)=d(L^*)$, where the last equation follows from \cite{meneg},   and $|M|=|N|^n \geq k_0$. 
Thus we can apply Theorem \ref{fio} to get that $P_{L^*,M}(d+2) \geq 1/2$. Moreover, if $N=S^a$, where $S$ is a simple non-abelian group and $a$ a positive integer, from the proof of  Lemma 1 in \cite{dv-l-2},  $|C^*|\leq n a |S|^{na-1}|\aut S| \leq na|S|^{na+1}$. 
 It follows that   
 $$ \frac{P_{L^*,M}(d+2) |M|^{d+2}}{|C^*|}  \geq \frac{1}{2} \cdot
  \frac{ |M|^{d+2}}{ na|S|^{na+1}}  .$$
 Since $t \leq |N|^{nd -1}$ and $M=N^n$, it is sufficient to check that 
 $ \frac{ |N|^{n(d+2)}}{2 na|S|^{na+1}}  \geq  |N|^{nd -1}$,  that is 
$|N|^{2n+1}=|S|^{2na+a} \geq 2 na|S|^{na+1}$, and this  follows from the fact that  $|S| \geq 60$. 
 \end{proof}

\begin{prop}\label{wreath}
Let $K$ be a transitive permutation group of degree $n \geq \log_{60} k_0$, where $k_0$ is the constant defined  in  Theorem \ref{fio}. Then 
 $$d\left( H \wr K\right) \leq \max \left(d\left(H/H' \wr K\right),  \left\lceil \frac{d(H)}{n} \right\rceil +2\right).$$
\end{prop}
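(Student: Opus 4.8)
The plan is to single out a \emph{generating chief factor} of $W=H\wr K$ and to argue by cases according to whether $W$ admits an abelian one. Write $B=H^n$, so $W=B\rtimes K$, and recall from \cite{dv-l} that $W$, which we may assume non-trivial (the trivial case being obvious), always has a generating chief factor $A$: a non-Frattini chief factor with $d(W)=d\big((L_A)_{\delta_W(A)}\big)$ and $d(L_A/\soc L_A)<d(W)$. The two inputs I would exploit are Proposition \ref{chief}, which pins down the $W$-chief factors in terms of those of $H$ and of $K$, and Proposition \ref{L_t}, which (because $n$ is large) controls non-abelian crown-based powers of $L\wr K$ in terms of those of $L$.

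Suppose first that $W$ has an abelian generating chief factor. Then Proposition \ref{ab-crown} gives $d(W)=d(I_W)$, while Proposition \ref{a2-prop1} with $G=K$ expresses $d(I_W)$ as $\max\{d(I_{H/H'\wr K}),\lceil(d(I_H)-2)/n\rceil+2\}$; since the augmentation ideal of a $d$-generated group is generated by $d$ elements as a module, $d(I_{H/H'\wr K})\le d(H/H'\wr K)$ and $d(I_H)\le d(H)$, and the bound drops out. From now on assume every generating chief factor is non-abelian and fix one such $A$. I would pass to a chief series of $W$ refining $1\lhd B'\lhd B\lhd W$ and take $A$ to occur in it; as no non-abelian chief factor can lie strictly between $B'$ and $B$, Proposition \ref{chief} leaves two possibilities. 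If $A$ lies above $B$, then Proposition \ref{chief}(3) gives $\delta_W(A)=\delta_K(A)$ and the same associated monolithic primitive group for the $W$- and the $K$-action on $A$; since a crown-based power $L_{A,k}$ depends up to isomorphism only on $L_A$ and $k$, the group $(L_A)_{\delta_W(A)}$ is then a copy of $(L_A)_{\delta_K(A)}=K/R_K(A)$, a quotient of $K$, so $d(W)\le d(K)\le d(H/H'\wr K)$ because $K$ is a quotient of $H/H'\wr K$.

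The remaining case $A\sim_W A_0^{\,n}$ with $A_0$ a non-Frattini non-abelian chief factor of $H$ inside $B'$ is the crux. Here Proposition \ref{chief}(1) gives $L_A\cong L_{A_0}\wr K$ and $\delta_W(A)=\delta_H(A_0)=:t\ge1$; put $L=L_{A_0}$, $L^*=L\wr K$ and $N=\soc L=A_0$. As $A_0$ is non-abelian, $|N|\ge 60$, hence $|N|^n\ge 60^{n}\ge 60^{\log_{60}k_0}=k_0$, so Proposition \ref{L_t} is available. I would feed it the value
\[ d=\max\Big\{\big\lceil d(H)/n\big\rceil,\ d(L^*/\soc L^*)-2\Big\}.\]
Then $d\ge d(L^*/\soc L^*)-2$ by construction, and $d(L_t)\le d(H)\le n\lceil d(H)/n\rceil\le dn$ because $L_t=(L_{A_0})_t\cong H/R_H(A_0)$ is a quotient of $H$; so Proposition \ref{L_t} gives $d(W)=d((L^*)_t)\le d+2=\max\{\lceil d(H)/n\rceil+2,\ d(L^*/\soc L^*)\}$. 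The alternative $d+2=d(L^*/\soc L^*)$ cannot occur, since it would force $d(W)\le d(L^*/\soc L^*)=d(L_A/\soc L_A)<d(W)$; hence $d(W)\le\lceil d(H)/n\rceil+2$, and in all cases $d(W)\le\max\{d(H/H'\wr K),\lceil d(H)/n\rceil+2\}$.

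The step I expect to be the main obstacle is exactly this last one. Applied head-on, Proposition \ref{L_t} needs an a priori bound on $d(L^*/\soc L^*)=d\big((L_{A_0}/A_0)\wr K\big)$, which is a wreath product of the very same type as $W$, so a naive induction would only reproduce the inequality one is trying to prove. Folding $d(L^*/\soc L^*)-2$ into the choice of $d$ and then discarding the bad branch of the resulting maximum by invoking the defining inequality $d(L_A/\soc L_A)<d(W)$ of a \emph{generating} chief factor is the device that breaks the circularity; once it is in place, the rest — bookkeeping with the ceilings and checking which subgroups are quotients of which — is routine.
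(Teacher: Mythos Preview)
Your argument is correct, and the abelian case and the ``above $B$'' case match the paper's proof essentially verbatim. The real divergence is in how you handle the non-abelian generating chief factor lying in $B'$. The paper proceeds by induction on $|H|$: it sets $d_0=\max\big(d(\ol H\wr K),\lceil d(H)/n\rceil+2\big)$, applies the inductive hypothesis to $L/N$ (which has smaller order than $H$) to bound $d(L^*/\soc L^*)=d\big((L/N)\wr K\big)\le d_0$, and then invokes Proposition~\ref{L_t} with $d=d_0-2$. You bypass induction entirely by folding $d(L^*/\soc L^*)-2$ into the parameter $d$ handed to Proposition~\ref{L_t} and then eliminating that branch of the resulting maximum via the defining strict inequality $d(L_A/\soc L_A)<d(W)$ of a generating chief factor. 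This is a neat trick: it makes the proof self-contained in a single step and shows that the inductive machinery is not really needed here. The paper's approach, on the other hand, yields the slightly stronger intermediate fact that $d\big((L/N)\wr K\big)$ itself is bounded by $d_0$, which is not visible from your argument; but for the stated proposition your route is shorter and just as rigorous.
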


\begin{proof} 
Set $\ol H=H/H'$. 
When $W= H \wr K$ has an abelian generating chief factor, by Proposition \ref{ab-crown} $d(G)=d(I_G)$, and then  the result follows from 
 Proposition \ref{a2-prop1}:  
\begin{eqnarray*} 
d(W)&=&d(I_W)= \max \left( d\left(I_{\ol H \wr K}\right), \left[ \frac{d(I_H) -2}{n} \right]+2\right)\\
 &\leq & \max \left( d\left({\ol H \wr K}\right), \left[ \frac{d(H) }{n} \right]+2 \right).
\end{eqnarray*} 

Now we assume that every generating chief factor  is non-abelian and we argue by induction on $|H|$, the case $|H|=1 $ being obviously true. 
Let $M$ be a non-abelian generating chief factor of the series \ref{chief-series of B}. If $M$ is not contained in $B'$, then,   by Proposition \ref{chief}, $M$ is a  $K$-group such that   $\delta_W(M)=\delta_K(M)$ and the crown-based power $L_{M, \delta_W(M)}$ is a homomorphic image of $K$. 
Therefore 
$$d(W) =d\left(L_{M, \delta_W(M)}\right) \leq d(K) \leq d\left(\ol H \wr K\right)$$ and the result follows.

We are left with the case where $M$ is a non-abelian chief factor contained in $B'$. From Proposition \ref{chief} we know that there exists a non-abelian chief factor  $N$ of the series \ref{chief-series of H} such that $\delta_W(M)=\delta_H(N)$ and $L_M \cong L_N \wr K$. 
Set $L=L_N$, $L^*=L \wr K$ and $\delta=\delta_H(N)$.

%
%
Let $d_0=\max \left(d\left(\ol H \wr K\right),  \left\lceil \frac{d(H)}{n} \right\rceil +2\right)$; we want to apply Proposition \ref{L_t} to 
 prove that $d(W) =d\left(L^*_\delta\right)\leq d_0$.  
 As  $|L/N|<|H|$, by induction we get 
 $$ d\left( L/N \wr K\right) \leq  \max \left(d\left({L/L'} \wr K\right),  \left\lceil \frac{d(L/N)}{n} \right\rceil +2\right).$$ Since  
  ${L/L'}$  is  a homomorphic image of $\ol H$ and $L^*/M= L/N \wr K$, we deduce 
 that 
 $$d\left(L^*/M\right)=d\left( L/N \wr K\right)  \leq \max \left(d\left(\ol H \wr K\right),  \left\lceil \frac{d(H)}{n} \right\rceil +2\right)= d_0.$$ 
 Moreover $d_0 \geq  \left\lceil \frac{d(H)}{n} \right\rceil +2
$,  that is $n(d_0-2) \geq d(H) \geq d\left(L_\delta\right)$. Also, the assumption  $n \geq \log_{60} k_0$,  gives $|N|^n \geq k_0$.
 Therefore  all the hypothesis of Proposition  \ref{L_t} are satisfied (for $d=d_0-2$) and we conclude that  $d(W) =d\left(L^*_\delta\right)\leq d_0$.
 \end{proof}

The previous result reduces the problem of finding a bound to $d(W)$ to the case where $H$ is an abelian group.  
Let \[ \rho_{K,H,p}= \max_{M} h_K (M) +d_p\left( H/H'\right)\]
where $M$ ranges over the set of non trivial irreducible $\F_pK$-modules, with $\rho_{K,H,p}= 0$ if every 
irreducible $\F_pK$-module is trivial.

\begin{prop}\label{wreath-ab} 
 If $H$ is abelian, then  
 $d( H \wr K) \leq \max_{p \mid | H|} ( d(  H \times K), \ \rho_{K,H,p}).$
 \end{prop}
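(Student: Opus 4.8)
The plan is to reduce the computation of $d(H\wr K)$ for abelian $H$ to the computation of $d(I_{H\wr K})$, the minimal number of generators of the augmentation ideal, by first disposing of the case where $H\wr K$ has an abelian generating chief factor, and then bounding $d(I_{H\wr K})$ using the description of $I_G$ as a module whose generator number is $\max_M h_G(M)$ over irreducible modules $M$ (the Cossey--Gruenberg--Kov\'acs result quoted in Proposition \ref{ab-crown}).

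First, suppose $W=H\wr K$ has an abelian generating chief factor. Then Proposition \ref{ab-crown} gives $d(W)=d(I_W)$, so it suffices to bound $d(I_W)$. By the Cossey--Gruenberg--Kov\'acs formula $d(I_W)=\max\{h_W(M)\mid M \text{ irreducible } \F_pW\text{-module}, p\mid |W|\}$. I would split the maximum according to the type of $M$, using Proposition \ref{moduli}. If $M$ is trivial (as a $W$-module) of order $p$, then $h_W(M)=d_p(W/W')$; since $H$ is abelian, $W/W'=(H\wr K)/(H\wr K)'$ has the same Sylow structure as $(H\times K)/(H\times K)'$ up to the contribution of $H$, so $h_W(M)\le d(H\times K)$ — more precisely $h_W(\F_p)=d_p(W/W')\le d(H\times K)$. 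If $M$ is $W$-equivalent to a non-Frattini chief factor inside $B'$: but $H$ abelian forces $B'=1$, so this case is vacuous. If $M$ is $W$-equivalent to a chief factor of $B/B'=B$, then by Proposition \ref{moduli}(2), $h_W(M)\le h_K(M)+d_p(H/H')=h_K(M)+d_p(H)$, which is at most $\rho_{K,H,p}$. If $M$ is not equivalent to any non-Frattini chief factor of $B$ but $\delta_W(M)=\delta_K(M)\ge 1$, then Proposition \ref{moduli}(3) gives $h_W(M)=h_K(M)\le\rho_{K,H,p}$ as well (here $M$ is non-trivial as a $K$-module). Finally if $\delta_W(M)=0$ then $h_W(M)\le 2$ by Proposition \ref{moduli}(4), and $2\le d(H\times K)$ unless $H\times K$ is cyclic, a case that can be checked directly. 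Taking the maximum over all $M$ and all primes $p\mid |W|=|H|\cdot|K|$ yields $d(W)\le \max_{p\mid |H|}(d(H\times K),\rho_{K,H,p})$, noting that primes dividing $|K|$ but not $|H|$ contribute only through $h_K(M)\le\rho_{K,H,p'}$-type terms that are already dominated by $d(K)\le d(H\times K)$ or need the $p\mid|H|$ convention handled as in the statement.

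Second, suppose every generating chief factor of $W$ is non-abelian; let $M$ be such a factor in the refined series \ref{chief-series of B}. Since $H$ is abelian, $B'=1$, so $M$ cannot lie in $B'$; hence by Proposition \ref{chief}(3) (or the argument in the proof of Proposition \ref{wreath}) $M$ is a $K$-group with $\delta_W(M)=\delta_K(M)$ and the crown-based power $L_{M,\delta_W(M)}$ is a homomorphic image of $K$. Therefore $d(W)=d(L_{M,\delta_W(M)})\le d(K)\le d(H\times K)$, which is $\le\max_{p\mid|H|}(d(H\times K),\rho_{K,H,p})$, and we are done.

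I expect the main obstacle to be the bookkeeping in the abelian-generating-factor case: making sure that every irreducible $\F_pW$-module $M$, for every prime $p$, is correctly matched to one of the four cases of Proposition \ref{moduli}, and that the bound $d_p(W/W')\le d(H\times K)$ for trivial modules and the bound $h_K(M)+d_p(H)\le\rho_{K,H,p}$ for $B/B'$-type modules are uniformly valid — in particular the edge cases where $H\times K$ is cyclic (so $d(H\times K)=1$) and where some $\rho_{K,H,p}=0$ because $\F_pK$ has only the trivial irreducible, which forces all relevant $M$ into the trivial case. The rest is a direct application of Propositions \ref{ab-crown}, \ref{moduli}, \ref{chief} and \ref{wreath} together with the formula for $d(I_W)$.
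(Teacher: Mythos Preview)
Your argument is correct, but it takes a longer route than the paper's proof. The paper never passes through $d(I_W)$ or the Cossey--Gruenberg--Kov\'acs formula: it simply picks \emph{one} generating chief factor $M$ of $W=H\wr K$ and treats three cases directly. If $M$ is non-abelian, the argument is identical to your second case. If $M$ is abelian and central, then $d(W)=h_W(M)=h_{W/W'}(M)\le d(W/W')\le d(H\times K)$, using only that $W/[B,K]\cong H\times K$. If $M$ is abelian and non-central, then (since $B'=1$) Proposition~\ref{moduli}(2) or (3) applies to this single $M$ and gives $d(W)=h_W(M)\le h_K(M)+d_p(H)\le\rho_{K,H,p}$.

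Your detour through $d(I_W)=\max_M h_W(M)$ obliges you to bound $h_W(M)$ for \emph{every} irreducible $W$-module, not just the generating one. That is why you run into the case $\delta_W(M)=0$ (impossible for a generating chief factor), the separate treatment of trivial modules, and the bookkeeping for primes $p\nmid|H|$ --- none of which appears in the paper's three-line abelian case. What your approach buys is that it exercises all four clauses of Proposition~\ref{moduli} simultaneously and shows that the bound on $d(I_W)$ holds unconditionally; what the paper's approach buys is brevity and the complete avoidance of those edge cases. Both are valid, but for this proposition the paper's direct route is noticeably cleaner.
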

\begin{proof}
Let $W=H \wr K$ and let $M$ be a generating chief factor for $W$. 

If $M$ is non-abelian, then $M$ can not be $W$-equivalent to any chief factor of $B=H^n$, 
 hence  $R_W(M) \geq B$ and $L_{M, \delta_W(M)}$ is a homomorphic image of $K$. 
 It follows that 
$$d(W)=d(L_{M, \delta_W(M)}) \leq d(K) \leq d(  H \times K)$$ 
and we are done. 

Now, let us assume that $M$ is abelian.
If  $M$ is central, by equation \ref{central}  it follows that $h_W(M)= h_{W/W'}(M) \leq d(W/W') \leq d(H \times K)$ 
 since  $W/[B,K] \cong H \times K$. 
Thus $d(W) = h_W(M) \leq d(H \times K)$ and the result follows. 

Then we are left with the case where $M$ is  non-central. By Proposition   \ref{moduli} (both (2) and (3)),
 $ h_{  W}(M)  \leq h_K(M)+d_p(H)$ and therefore $d(W)= h_{  W}(M)  \leq  \rho_{K,H,p}$. This completes the proof. 
\end{proof}

\section{Iterated Wreath products} 

Note that if $K$ is a permutation group of degree $n$, then 
$$d(H) \leq n \cdot d( H \wr K);$$ indeed,   given a  set 
$$\{g_i=(h_{i,1}, \ldots, h_{i,n})k_i \mid \ h_{i,j} \in H, \ k_i \in K, \ i=1,\ldots ,d \}$$
 of generators for $H \wr K$,  
 then $H$ can be generated by the elements $\{ h_{i,j} \mid \ j=1, \ldots ,n \textrm{, } i=1,\ldots ,d\}$.  Moreover, 
 $$d(H \wr K) \geq
d( H/H' \times K/K')  $$ since $H/H' \times K/K'$ is a homomorphic image of 
 $ H \wr K $. 
 
 This shows the \lq\lq only if" implication of Theorem \ref{main}. The other implication is proved in the following theorem.   
\begin{thm}
Let $(G_i)_{i\in \N}$  be a sequence of  transitive permutation  groups of degree $n_i$. Let $\ol G_i=G_i/G'_i$ and denote by $W_m= G_m \wr \cdots \wr G_1$ the iterated permutational wreath product of the first $m$ groups. 
 Assume  that there exists two integers $\C$ and $\D$ with    
\begin{itemize}
\item[(i)] $d\left(\prod_{i=1}^{\infty} \ol G_i\right)=\C$
\item[(ii)] $d(G_i) \leq \D \cdot n_1 \cdots n_{i-1}$ for every $i>1$.  
\end{itemize} 
Then, for $\E=\max (\D+2, d(W_{i_0}))$, where $i_0$ is the first index such that 
the degree $ n_1 \cdots n_{i_0}$ of $W_{i_0}$ is at least $\log_{60}(k_0)$,  we get the following: 
\begin{enumerate}
\item If $M$ is a non-trivial irreducible  $\F_p W_m$-module, where $m \geq i_0$, then  
\[h_{W_m}(M) \leq \E + d_p  \left(\prod_{i=i_0}^{m} \ol G_i\right);\]
\item $d(W_m) \leq \E+ d\left(\prod_{i=i_0}^{m} \ol G_i\right)$ for every $m \geq i_0$; 
\item  The inverse limit of the iterated wreath products $W_m$ is finitely generated and $d\left(\varprojlim_m W_m\right) \leq \E+\C.$ 
\end{enumerate}
\end{thm}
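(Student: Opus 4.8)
The plan is to prove the three statements simultaneously by induction on $m\geq i_0$, using the wreath product decomposition $W_m=G_m\wr W_{m-1}$ together with the structural results of Sections 3 and 4. The base case $m=i_0$ is immediate: by definition $\E\geq d(W_{i_0})$, so (2) holds; (1) holds because $h_{W_{i_0}}(M)\leq d(W_{i_0})\leq \E$ for any non-trivial irreducible module $M$; and (3) is a consequence of (2) once the induction is complete.

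For the inductive step, I would write $W=W_m=H\wr K$ with $H=G_m$ and $K=W_{m-1}$, a transitive group of degree $n=n_1\cdots n_{m-1}$. Since $m>i_0$ we have $n\geq n_1\cdots n_{i_0}\geq \log_{60}k_0$, so Propositions \ref{wreath}, \ref{wreath-ab}, \ref{L_t} all apply. First I would establish (2): apply Proposition \ref{wreath} to get
\[
d(W)\leq\max\left(d(\ol H\wr K),\ \left\lceil\tfrac{d(H)}{n}\right\rceil+2\right).
\]
For the second term, hypothesis (ii) gives $d(G_m)\leq\D\cdot n_1\cdots n_{m-1}=\D n$, so $\lceil d(H)/n\rceil+2\leq\D+2\leq\E\leq\E+d(\prod_{i=i_0}^m\ol G_i)$. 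For the first term, $\ol H=\ol G_m$ is abelian, so Proposition \ref{wreath-ab} bounds $d(\ol H\wr K)$ by $\max_{p\mid|\ol H|}(d(\ol H\times K),\rho_{K,\ol H,p})$; here $d(\ol H\times K)\leq d(\ol H)+d(K)$ and $\rho_{K,\ol H,p}=\max_M h_K(M)+d_p(\ol H)$. Using the inductive hypothesis (1) for $W_{m-1}=K$, namely $h_K(M)\leq\E+d_p(\prod_{i=i_0}^{m-1}\ol G_i)$, and $d_p(\ol G_m)$ stacked on top, the $d_p$-terms combine into $d_p(\prod_{i=i_0}^m\ol G_i)$ (since $\ol G_m$ is abelian, $d_p$ of a direct product of an abelian group with another group is additive on the relevant Sylow parts), giving $\rho_{K,\ol H,p}\leq\E+d_p(\prod_{i=i_0}^m\ol G_i)\leq\E+d(\prod_{i=i_0}^m\ol G_i)$, and similarly for $d(\ol H\times K)$ using the inductive hypothesis (2) for $K$. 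This yields (2) for $m$.

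Next I would prove (1) for $m$. Let $M$ be a non-trivial irreducible $\F_p W_m$-module; apply Proposition \ref{moduli} according to the three cases. If $M$ is $W$-equivalent to a non-Frattini chief factor inside $B'=(G_m^n)'$, then $M\sim_W U^n$ for a non-trivial irreducible $\F_p G_m$-module $U$, but $G_m$ is abelian so $h_{G_m}(U)\leq d_p(G_m)\leq\E$, whence $h_W(M)\leq\lceil(h_{G_m}(U)-2)/n\rceil+2\leq\lceil(\E-2)/n\rceil+2\leq\E$ — actually since $G_m$ abelian has no non-trivial irreducible modules fixed in a chief series below $G_m'=1$, this case is vacuous; I would note this. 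If $M$ is $W$-equivalent to a chief factor of $B/B'$, Proposition \ref{moduli}(2) gives $h_W(M)\leq h_K(M)+d_p(G_m/G_m')$, and the inductive hypothesis (1) for $K$ bounds $h_K(M)\leq\E+d_p(\prod_{i=i_0}^{m-1}\ol G_i)$, so $h_W(M)\leq\E+d_p(\prod_{i=i_0}^m\ol G_i)$ as the $d_p$ terms combine. If $M$ is not $W$-equivalent to any non-Frattini chief factor of $B$ but $\delta_W(M)=\delta_K(M)\geq1$, then $h_W(M)=h_K(M)\leq\E+d_p(\prod_{i=i_0}^{m-1}\ol G_i)\leq\E+d_p(\prod_{i=i_0}^m\ol G_i)$; and if $\delta_W(M)=0$, then $h_W(M)\leq 2\leq\E$. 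Combining all cases gives (1) for $m$.

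Finally, (3) follows by a standard compactness argument: for a profinite group that is an inverse limit of finite groups $W_m$, the minimal number of topological generators equals $\sup_m d(W_m)$ (equivalently, $W_\infty$ is $d$-generated iff every continuous finite quotient is, and these are quotients of the $W_m$). Since $d(\prod_{i=i_0}^m\ol G_i)\leq d(\prod_{i=1}^\infty\ol G_i)=\C$ for every $m$ (as $\prod_{i=i_0}^m\ol G_i$ is a quotient of $\prod_{i=1}^\infty\ol G_i$), part (2) gives $d(W_m)\leq\E+\C$ uniformly in $m\geq i_0$, and for $m<i_0$ we have $d(W_m)\leq d(W_{i_0})\leq\E\leq\E+\C$ by the monotonicity $d(W_m)\leq d(W_{m+1})$ coming from the fact that $W_m$ is a quotient of $W_{m+1}$. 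Hence $d(\varprojlim_m W_m)\leq\E+\C$. The main obstacle I anticipate is the careful bookkeeping of how the $d_p$-contributions of successive abelian layers $\ol G_i$ accumulate additively into $d_p(\prod_{i=i_0}^m\ol G_i)$ across the different cases of Proposition \ref{moduli}, and making sure the $\lceil\cdot\rceil$ divisions in Propositions \ref{wreath} and \ref{wreath-ab} never push the bound above $\E$; everything else is a direct assembly of the previously established propositions.
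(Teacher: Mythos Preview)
Your overall induction scheme is exactly the paper's, and parts (1) and (3) are handled essentially as in the paper. There are, however, two concrete problems.

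First, in your proof of (1) you write ``$G_m$ is abelian'' and conclude that the case $M\sim_W U^n$ with $U$ below $G_m'$ is vacuous. This is a confusion of $G_m$ with $\ol G_m=G_m/G_m'$: the $G_m$ are arbitrary transitive groups, so $B'=(G_m')^n$ is in general nontrivial and this case must be treated. The correct bound is immediate from hypothesis~(ii): $h_{G_m}(U)\le d(G_m)\le \D\, n$, whence $\lceil (h_{G_m}(U)-2)/n\rceil+2\le \D+2\le \E$, which is precisely what the paper does.

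Second, and more seriously, in your proof of (2) the estimate $d(\ol G_m\times W_{m-1})\le d(\ol G_m)+d(W_{m-1})$ is too weak to close the induction. Together with the inductive hypothesis it yields only
\[
d(\ol G_m)+\E+d\Bigl(\prod_{i=i_0}^{m-1}\ol G_i\Bigr),
\]
and you would need $d(\ol G_m)+d(\prod_{i=i_0}^{m-1}\ol G_i)\le d(\prod_{i=i_0}^{m}\ol G_i)$, which is false in general: the two maxima $\max_p d_p(\cdot)$ may be attained at different primes (take $\ol G_m=\Z/2\Z$ and $\prod_{i=i_0}^{m-1}\ol G_i=\Z/3\Z$). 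Your ``similarly'' does not transfer from the $\rho_{K,\ol H,p}$ estimate, because there everything is held at a fixed prime $p$ and $d_p$ \emph{is} additive on direct products, whereas $d$ is not. The paper closes this gap with an extra structural observation: any generating crown-based power of $\ol G_m\times W_{m-1}$ is associated either to a non-central chief factor, hence already a quotient of $W_{m-1}$, or to a central one, hence a quotient of the abelianisation $\ol G_m\times \ol W_{m-1}\cong\prod_{i=1}^{m}\ol G_i$. This gives
\[
d(\ol G_m\times W_{m-1})\le \max\Bigl(d(W_{m-1}),\,d\Bigl(\prod_{i=1}^{m}\ol G_i\Bigr)\Bigr),
\]
and the second term is bounded prime by prime using $d_p(\prod_{i<i_0}\ol G_i)\le d(W_{i_0})\le \E$, which yields the required $\E+d(\prod_{i=i_0}^{m}\ol G_i)$. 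Without this sharper step your induction for (2) does not go through.
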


\begin{proof} 
\begin{enumerate}
\item
We argue by induction on $m$. The case $m = i_0$, is trivial since 
$h_{W_{i_0}}(M) \leq d(W_{i_0}) \leq \E$.  
 So let $m >i_0$ and let $M$ be  a non-trivial irreducible  $\F_p W_m$-module.
 By Proposition \ref{moduli} applied to  $W_m=G_m \wr_n W_{m-1}$, where   $n=n_1 \cdots n_{m-1}$ is the degree of $W_{m-1}$, 
 we get that either $h_{W_m}(M) \leq \left\lceil \frac{h_{G_m}(U)-2}{n}\right\rceil+2$
  for an $\F_p G_m$-module $U$ contained in $G'_m$, or  $h_{W_m}(M)\leq h_{W_{m-1}}(M)+d_p(\ol G_m)$; thus 
\[h_{W_m}(M) \leq \max \left(  \left\lceil \frac{h_{G_m}(U)-2}{n}\right\rceil+2,
  h_{W_{m-1}}(M)+d_p\left( \ol G_m\right) \right).\]
Since $h_{G_m}(U) \leq d(G_m)\leq \D n$ implies   $\left\lceil \frac{h_{G_m}(U)-2}{n}\right\rceil+2 \leq \D+2$, and, by inductive hypothesis $ h_{W_{m-1}}(M) \leq 
 \E + d_p \left(\prod_{i=i_0}^{m-1} \ol G_i\right)$, we get 
 \[h_{W_m}(M) \leq \max \left(  \D+2 ,
 \E + d_p \left(\prod_{i=i_0}^{m-1} \ol G_i\right) +d_p\left( \ol G_m\right) \right) \leq \E + d_p\left(\prod_{i=i_0}^{m} \ol G_i\right) .\]

\item Again, we argue by induction on $m$, the case  
 $m = i_0$ being trivial.  

So  let  $m > i_0$ that is  $n=n_1 \cdots n_{m-1}> \log_{60}(k_0)$.
   Proposition \ref{wreath} applied to  $W_m=G_m \wr_n W_{m-1}$, gives  
   \begin{eqnarray} 
 d(W_m) &\leq& \max \left(d\left(\ol G_m  \wr  W_{m-1}\right), 
 \left\lceil \frac{d(G_m)}{n} \right\rceil +2\right) \nonumber \\ 
&\leq& \max \left(d\left(\ol G_m  \wr  W_{m-1}\right), \label{prima} 
 \D+2\right).
 \end{eqnarray} 

Then we apply Proposition  \ref{wreath-ab} 
to  have 
\begin{eqnarray} \label{1}
 d(\ol G_m  \wr  W_{m-1}) \leq
 \max_{p \mid |\ol G_m |} \left( d\left( \ol G_m \times W_{m-1}\right), \ \rho_{W_{m-1},G_m,p}\right)
 \end{eqnarray} 
where 
\[ \rho_{W_{m-1},G_m,p}= \max_{M}\left( h_{W_{m-1}} (M)\right) +d_p\left(\ol G_m \right) \]
and  $M$ ranges over the set of non trivial irreducible $\F_pW_{m-1}$-modules, with $\rho_{W_{m-1},G_m,p}= 0$ if every 
irreducible $\F_pW_{m-1}$-module is trivial. By part (1) of this theorem, 
$  h_{W_{m-1}} (M) \leq \E + d_p \left(\prod_{i=i_0}^{m-1} \ol G_i\right)$, 
 and hence 
\begin{eqnarray}\label{2}
 \rho_{W_{m-1},G_m,p} \leq  \E + d_p \left(\prod_{i=i_0}^{m-1} \ol G_i\right) +d_p\left(\ol G_m \right)
=  \E + d_p \left(\prod_{i=i_0}^{m} \ol G_i\right).
 \end{eqnarray} 

Moreover, note that a crown-based power homomorphic image of $ \ol G_m \times W_{m-1}$ is either a homomorphic image of  $ W_{m-1}$ or  a homomorphic image of $ \ol G_m \times \ol W_{m-1}$ (in the latter case it is associated to a central chief factor). This implies that  
\begin{eqnarray*}
 d\left( \ol G_m \times W_{m-1}\right) &\leq& \max \left(   d\left( \ol G_m \times \ol W_{m-1}\right), d \left( W_{m-1}\right) \right) \\
 &\leq &  \max  \left(   d\left(\prod_{i=i_0}^{m} \ol G_i\right), d \left( W_{m-1}\right)\right).
 \end{eqnarray*} 
 By inductive hypothesis we get $ d \left( W_{m-1}\right) \leq  \E+ d\left(\prod_{i=i_0}^{m-1} \ol G_i\right)$, and 
 therefore 
\begin{eqnarray}
 d\left( \ol G_m \times W_{m-1}\right) 
 &\leq &  \max  \left(   d\left(\prod_{i=i_0}^{m} \ol G_i\right), \E+ d\left(\prod_{i=i_0}^{m-1} \ol G_i\right)\right) \nonumber \\
 &\leq&  \E+ d \left(\prod_{i=i_0}^{m} \ol G_i\right). \label{3}
 \end{eqnarray}  

>From (\ref{1}), (\ref{2}) and  (\ref{3}), we obtain that 
\begin{eqnarray*}
d\left(\ol G_m  \wr  W_{m-1}\right) &\leq&
 \max_{p \mid |\ol G_m |}  \left( d\left( \ol G_m \times W_{m-1}\right), \ \rho_{W_{m-1},G_m,p}\right)\\
&\leq & \max_{p \mid |\ol G_m |}  \left( \E+ d\left(\prod_{i=i_0}^{m} \ol G_i\right),  \E + d_p \left(\prod_{i=i_0}^{m} \ol G_i\right)\right)\\
&\leq & \E+ d\left(\prod_{i=i_0}^{m} \ol G_i\right). 
 \end{eqnarray*}  
Since $\D+2 \leq \E$, from (\ref{prima})  we conclude that 
\begin{eqnarray*}
d\left(W_m\right) &\leq& \max \left(d\left(\ol G_m  \wr  W_{m-1}\right), \D+2\right)\\
 &\leq& \E+ d\left(\prod_{i=i_0}^{m} \ol G_i\right).
 \end{eqnarray*}  

\item This follows directly from (2) and the assumption that  $d\left(\prod_{i=1}^{\infty} \ol G_i\right)=\C$.  
Indeed $d\left(W_m\right) \leq  \E+ d\left(\prod_{i=i_0}^{m} \ol G_i\right) \leq \E+\C$ for every $m$, and the same bound applies to the generating number of their inverse limit. 
\end{enumerate}
\end{proof}


\section{Probability of generating an iterated wreath product} 

Once we know that a profinite group $G$ is finitely generated, it is natural to ask about the probability to find a set of generators for the group. A profinite group $G$ is called  Positively Finitely Generated (PFG) if there exists an integer $t \geq d(G)$ such that a randomly chosen $t$-tuple generates $G$ with positive probability. 

Note that it is possible to extend the definitions of $G$-equivalence and crowns to profinite groups (see \cite{corone-profin}). Moreover, if $G$ is finitely generated then $\delta_G(A)$ is finite for every finite irreducible $G$-group $A$ and in particular  this holds for the chief factors of $G$ \cite[Theorem 12]{corone-profin}. 
 Recently, Jaikin-Zapirain and Pyber gave a characterization of PFG-groups in terms of non-abelian crowns: 

\begin{thm}[Jaikin-Zapirain, Pyber \cite{pfg-j-p}]\label{pfg1}
A finitely generated profinite group $G$ is PFG if and only if there exists a constant $c$ such that for every non-abelian chief factor $A$ of $G$, $\delta_G(A) \leq l(A)^c$ where $l(A)$ is the minimal degree of a faithful transitive representation of $A$. 
\end{thm}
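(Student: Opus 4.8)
The plan is to run the argument through the theorem of Mann and Shalev characterising positive finite generation by maximal subgroup growth: a finitely generated profinite group $G$ is PFG if and only if there is a constant $c$ with $m_n(G)\le n^{c}$ for every $n$, where $m_n(G)$ denotes the number of open maximal subgroups of $G$ of index $n$. With this reduction the task is to estimate $m_n(G)$ through the crown structure of $G$. Each maximal subgroup $M<G$ determines a monolithic primitive quotient $L=G/\core_G(M)$, whose minimal normal subgroup is the $G$-equivalence class of a chief factor $A$, while $M/\core_G(M)$ is a core-free maximal subgroup of $L$; conversely, by the material of Section~\ref{gen-crowns}, every surjection $G\twoheadrightarrow L$ carrying $\soc L$ onto the class of $A$ factors through $G/R_G(A)\cong L_{A,\delta}$ with $\delta=\delta_G(A)$. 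Writing $m^{*}_n(L)$ for the number of core-free maximal subgroups of $L$ of index $n$, and $m^{(L)}_n(G)$ for the number of maximal $M<G$ with $G/\core_G(M)\cong L$ and $[G:M]=n$, one has $m_n(G)=\sum_L m^{(L)}_n(G)$ and $m^{(L)}_n(G)=r_L\, m^{*}_n(L)$, where $r_L$ is the number of normal subgroups $N$ with $G/N\cong L$ and $\soc(G/N)\sim_G A$. The crown machinery controls $r_L$: when $A$ is non-abelian the socle crown of $L_{A,\delta}$ is a sum of $\delta$ copies of $A$ which $L_{A,\delta}$ does not permute, and the only $L_{A,\delta}$-equivariant automorphism of $A$ is the identity (since $\soc L\supseteq\inn A$), so $r_L=\delta_G(A)$; for abelian $A$ one gets $r_L$ bounded in terms of $\delta_G(A)$, $|A|$ and a first cohomology group. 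Thus, up to bounded factors, $m_n(G)\asymp\sum_L\delta_G(A_L)\,m^{*}_n(L)$.

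Now split the sum according to whether $\soc L$ is abelian or not. Using only that $G$ is finitely generated --- which bounds $\delta_G(A)$ in terms of $d(G)$ and $\dim A$, bounds the relevant cohomology, and leaves only $n^{O(1)}$ irreducible modules of order at most $n$ --- one checks that $\sum_{L:\ \soc L\text{ abelian}}m^{(L)}_n(G)\le n^{O(1)}$; so abelian crowns never obstruct PFG, and $G$ is PFG if and only if $\sum_{L:\ \soc L\text{ non-abelian}}\delta_G(A_L)\,m^{*}_n(L)\le n^{O(1)}$. Two facts, both resting on the classification of finite simple groups, settle this. (a) A core-free maximal subgroup of a monolithic primitive $L$ has index at least $l(\soc L)$, so for fixed $n$ only the non-abelian chief factors with $l(A)\le n$ contribute; and, conversely, every monolithic primitive $L$ has a core-free maximal subgroup of index at most $l(\soc L)^{K}$ for an absolute constant $K$. (b) Summed over all monolithic primitive $L$ with non-abelian socle and $l(\soc L)\le n$, one has $\sum_L m^{*}_n(L)\le n^{O(1)}$ --- up to isomorphism there are few primitive permutation groups of degree $n$ of non-affine type, each with few classes of point stabilisers of a given index. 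Granting (a) and (b): if $\delta_G(A)\le l(A)^{c}$ for every non-abelian chief factor, then for each $n$ the non-abelian part of $m_n(G)$ is at most $n^{c}\sum_L m^{*}_n(L)\le n^{O(1)}$, so $G$ is PFG. Conversely, if no such $c$ exists, pick non-abelian chief factors $A_j$ with $\delta_G(A_j)>l(A_j)^{\,j}$ and let $n_j\le l(A_j)^{K}$ be the least index of a core-free maximal subgroup $H<L_{A_j}$; since $G$ surjects onto $L_{A_j,\delta_G(A_j)}$, which has at least $\delta_G(A_j)$ pairwise non-conjugate maximal subgroups of index $n_j$ (constrain one of the $\delta_G(A_j)$ coordinates to lie in $H$), we get $m_{n_j}(G)\ge\delta_G(A_j)>l(A_j)^{\,j}\ge n_j^{\,j/K}$, which exceeds every fixed power of $n_j$; hence $G$ is not PFG.

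The serious obstacle is fact (b), together with the second half of (a): one must bound, uniformly over all monolithic primitive groups $L$, the number of core-free maximal subgroups of $L$ of index at most $n$ by a fixed power of $n$. For almost simple $L$ this is exactly the content of the Liebeck--Shalev (and Aschbacher--Guralnick) bounds on numbers of maximal subgroups; the real work is to push these estimates through the diagonal-, product- and twisted-wreath-type primitive monolithic groups $L\cong L_0\wr P$ built on an almost simple $L_0$, bounding their induced maximal subgroups without the exponent of $n$ drifting with the number of wreathed copies. These enumeration arguments form the technical heart of Jaikin-Zapirain and Pyber \cite{pfg-j-p}; everything else is the elementary bookkeeping above together with the crown formalism of \cite{dv-l} and \cite{corone-profin} recalled in Section~\ref{gen-crowns}.
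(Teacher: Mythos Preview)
The paper does not prove Theorem~\ref{pfg1}; it is quoted verbatim from \cite{pfg-j-p} and used as a black box in the proof of Proposition~\ref{pfg}. There is therefore no ``paper's own proof'' to compare your proposal against.

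As a sketch of the argument in \cite{pfg-j-p}, your outline is broadly accurate: reduce via Mann--Shalev to polynomial maximal subgroup growth, parametrise maximal subgroups by the monolithic primitive quotient $L=G/\core_G(M)$, use the crown $G/R_G(A)\cong L_{A,\delta_G(A)}$ to count the normal subgroups $N$ with $G/N\cong L$, dispose of the abelian-socle contribution using only finite generation, and for the non-abelian part invoke CFSG-based enumeration of primitive groups and their maximal subgroups. Your identification of fact~(b) as the genuine difficulty is correct, and your converse construction (exhibiting many non-conjugate maximal subgroups in $L_{A,\delta}$ when $\delta$ is large) is the right idea. One small imprecision: the claim that ``the only $L_{A,\delta}$-equivariant automorphism of $A$ is the identity'' is not literally true --- the relevant group is $C_{\aut L}(L/A)$, which can properly contain $\inn A$ --- but since $|C_{\aut L}(L/A)|\le |A|\cdot|\out S|$ for $A=S^a$, this only introduces a polynomially bounded factor and does not affect the argument.
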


This allows us to characterize PFG infinitely  iterated permutational wreath products. 
\begin{prop}\label{pfg}
Let $(G_i)_{i\in \N}$ be a sequence of  transitive permutation  groups of degree $n_i$.  
Assume that the inverse limit $W_\infty$ of the iterated permutational wreath products 
  $W_m= G_m \wr \cdots \wr G_1$ is finitely generated.
 Then $W_\infty$ is PFG if and only if 
there exists a constant $c$ such that for every non-abelian chief factor $A$ of $G_i$ and for every $i > 1$,
  $\delta_{G_i}(A) \leq l(A)^{c n_1 \cdots n_{i-1}}$. 
\end{prop}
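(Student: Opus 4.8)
The plan is to apply the Jaikin-Zapirain--Pyber criterion (Theorem \ref{pfg1}) to the finitely generated profinite group $G=W_\infty$, so the whole task reduces to understanding the non-abelian chief factors $A$ of $W_\infty$ together with the two invariants $\delta_{W_\infty}(A)$ and $l(A)$. Since $W_\infty$ is finitely generated, every $\delta_{W_\infty}(A)$ is finite \cite[Theorem 12]{corone-profin}, and every chief factor already appears in some finite quotient $W_m$. The first step is to describe these factors via the recursion $W_m=G_m\wr W_{m-1}$, whose base subgroup is $B_m=G_m^{\,n_1\cdots n_{m-1}}$ with $W_m/B_m\cong W_{m-1}$ and $B_m'=(G_m')^{\,n_1\cdots n_{m-1}}$. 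By Proposition \ref{chief}, the non-abelian chief factors of $W_m$ split into: those lying above $B_m$, which are exactly the non-abelian chief factors of $W_{m-1}$ (with the same $\delta$ and, being the same abstract group, the same $l$); and, for each non-abelian chief factor $A_0$ of $G_m$, the factor $A_0^{\,n_1\cdots n_{m-1}}$ inside $B_m'$, for which Proposition \ref{chief}(1) gives $\delta_{W_m}\!\big(A_0^{\,n_1\cdots n_{m-1}}\big)=\delta_{G_m}(A_0)$ and asserts that it is not $W_m$-equivalent to any chief factor of $W_m/B_m'$. The last clause is what guarantees that a factor ``introduced at level $i$'' never becomes equivalent to one introduced at a later level (it always sits above $B_j$ for $j>i$, hence inside $W_j/B_j'$), so $\delta$ stabilises. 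Iterating, every non-abelian chief factor $A$ of $W_\infty$ is introduced at a unique level $i\ge 1$, has the form $A\cong A_0^{\,n_1\cdots n_{i-1}}$ for a non-abelian chief factor $A_0$ of $G_i$, and satisfies $\delta_{W_\infty}(A)=\delta_{W_i}(A)=\delta_{G_i}(A_0)$.

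\textbf{Computing $l(A)$.} A non-abelian chief factor $A_0$ of $G_i$ is isomorphic to $S^a$ for a non-abelian simple group $S$, so $A\cong S^{\,a\,n_1\cdots n_{i-1}}$, and the arithmetic heart of the proof is the identity $l(S^b)=l(S)^b$ for every non-abelian simple $S$ and every $b\ge1$. The upper bound is the product action on $b$ copies of a minimal faithful $S$-set. For the lower bound one uses the description of a core-free subgroup $H$ of $S^b$ as a subdirect product attached to a partition of the $b$ coordinates into blocks, on each of which $H$ restricts to a (twisted) full diagonal: a block of size $s$ contributes a factor $|S|^{\,s-1}$ to the index, every block must have size $\ge2$ (a singleton block would force a whole factor $S_j$ into $H$), and $|S|\ge l(S)^2$ for non-abelian simple $S$; together with the factors $[S:\pi_j(H)]\ge l(S)$ coming from any coordinates on which $H$ does not project onto, this yields $[S^b:H]\ge l(S)^b$. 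Hence $l(A)=l\big(S^{\,a\,n_1\cdots n_{i-1}}\big)=l(S)^{\,a\,n_1\cdots n_{i-1}}=l(A_0)^{\,n_1\cdots n_{i-1}}$.

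\textbf{Conclusion.} Feeding the above into Theorem \ref{pfg1}, $W_\infty$ is PFG if and only if there is a constant $c$ with $\delta_{G_i}(A_0)\le l(A_0)^{\,c\,n_1\cdots n_{i-1}}$ for every $i\ge1$ and every non-abelian chief factor $A_0$ of $G_i$. The conditions with $i=1$ (where the product $n_1\cdots n_{i-1}$ is empty, equal to $1$) are finitely many, since $G_1$ is finite, and each reads $\delta_{G_1}(A_0)\le l(A_0)^c$ with $l(A_0)\ge5$; all of them hold once $c$ exceeds a fixed constant $c_1=c_1(G_1)$, so they may be absorbed into $c$ and discarded. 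What remains is exactly the stated condition: there is a constant $c$ such that $\delta_{G_i}(A)\le l(A)^{\,c\,n_1\cdots n_{i-1}}$ for every non-abelian chief factor $A$ of $G_i$ and every $i>1$.

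\textbf{Main obstacle.} The technical core is the identity $l(S^b)=l(S)^b$ (precisely its lower half, which is what produces the exponent $n_1\cdots n_{i-1}$ rather than a mere polynomial in it); I would spell out the subdirect-product structure of core-free subgroups of a direct power of a simple group, the fact that all blocks have size at least two, and the elementary inequality $|S|\ge l(S)^2$. The only other point requiring care is the bookkeeping in the first step — that the map $A_0\mapsto A_0^{\,n_1\cdots n_{i-1}}$ gives a level-by-level bijection onto the non-abelian chief factors, with no ``mixing'' between levels — which is handled entirely by the non-equivalence clause of Proposition \ref{chief}(1).
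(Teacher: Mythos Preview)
Your overall strategy coincides with the paper's: apply the Jaikin-Zapirain--Pyber criterion, use Proposition~\ref{chief}(1) and the recursion $W_m=G_m\wr W_{m-1}$ to match each non-abelian chief factor $M$ of $W_\infty$ with some $A^{\,n_1\cdots n_{i-1}}$ for a non-abelian chief factor $A$ of $G_i$ (with $\delta_{W_\infty}(M)=\delta_{G_i}(A)$), and then translate via $l(A^n)=l(A)^n$. The paper disposes of this last identity in one line by citing Proposition~5.2.7 of Kleidman--Liebeck~\cite{kleidman-lib}; you instead try to prove it directly, and that is where your write-up has a genuine gap.

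Your sketch treats a core-free $H\le S^b$ by simultaneously invoking the block/diagonal description (valid only for \emph{subdirect} $H$) and ``coordinates on which $H$ does not project onto'' (which exist only when $H$ is \emph{not} subdirect). If you try to reconcile these by splitting coordinates into $J=\{j:\pi_j(H)=S\}$ and $J'$, the projection $\pi_J(H)$ is subdirect in $S^J$ but need \emph{not} be core-free there: a singleton block in $\pi_J(H)$ only gives $S_j\le\pi_J(H)$, not $S_j\le H$. Concretely, with $|J|=1$ your estimate yields only $[S^b:H]\ge l(S)^{|J'|}=l(S)^{b-1}$, not $l(S)^b$. A correct argument runs by induction on $b$: if some $\pi_j(H)<S$ then $H'=H\cap S^{b-1}$ is core-free in $S^{b-1}$, $|H|=|H'|\cdot|\pi_j(H)|$, and one combines the inductive bound with $[S:\pi_j(H)]\ge l(S)$; if $H$ is subdirect, all blocks really do have size $\ge 2$ and $|S|\ge l(S)^2$ (itself a CFSG fact) finishes it. Since the paper treats this identity as a black box from~\cite{kleidman-lib}, the cleanest fix is simply to cite that reference.
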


\begin{proof} 
Let $M$ be a non-abelian chief factor of $W=W_\infty$ such that $\delta_W(M)>0$. Since  $\delta_W(M)$ does not depend on the chosen chief series and is finite (Theorems 11 and 12 in \cite{corone-profin}), then  $\delta_W(M)=\delta_{W_i}(M)$ for some $i$; let $i$ be the smallest integer with this property. Without loss of generality we can assume $i>1$. 
 Since 
 $\delta_{W_{i-1}}(M)<\delta_{W_i}(M)$, $M$ is equivalent to a non-abelian chief factor of $B=G_i^n$, the base subgroup of $W_i =G_i \wr W_{i-1}$, where $n=n_1 \cdots n_{i-1}$ is the degree of $W_{i-1}$. In particular $M$    is equivalent to a non-abelian chief factor  contained in
 $B'$, and from Proposition \ref{chief}  it follows that  there exists a non-abelian chief factor $A$ of $G_i$ such that $M \sim_{W_i} A^n$ and $\delta_{W_i}(M)= \delta_{G_i}(A)$.  Since  $ l(M)=l(A)^{ n} $ (see Proposition 5.2.7 in \cite{kleidman-lib} and the comments afterwords), the result follows from   
 the  characterization of PFG-groups given by  Jaikin-Zapirain and Pyber (Proposition \ref{pfg1}). 
\end{proof}

\end{document}